\numberwithin{equation}{section}
\newtheorem{thm}{Theorem}[section]
\newtheorem{prop}[thm]{Proposition}
\newtheorem{lem}[thm]{Lemma}
\newtheorem{cor}[thm]{Corollary}
\theoremstyle{definition}
\newtheorem{remark}[thm]{Remark}
\def\journal@name{}
\def\PP{{\mathbb P}}
\def\EE{{\mathbb E}}
\def\UU{{\mathbb U}}
\def\VVar{\mathbf{Var}}
\begin{document}

\begin{frontmatter}

\title{A Lower Bound for the Mixing Time of the Random-to-random Insertions
Shuffle}
\runtitle{A Lower Bound for the Random Insertions
Shuffle}
\thankstext{t1}{Research supported in part by Israel Science Foundation 853/10 and USAFOSR FA8655-11-1-3039.}

\author{\fnms{Eliran} \snm{Subag}\ead[label=e1]{elirans@techunix.technion.ac.il}\thanksref{t1}}
\address{Eliran Subag\\Electrical Engineering\\
 Technion, Haifa, Israel 32000 \\
\printead{e1}
}
\affiliation{Technion}

\runauthor{Eliran Subag}

\begin{abstract}
The best known lower and upper bounds on the total variation mixing time for the random-to-random
insertions shuffle are $\left(\frac{1}{2}-o\left(1\right)\right)n\log n$
and $\left(2+o\left(1\right)\right)n\log n$. A long standing open problem
is to prove that the mixing time exhibits a cutoff. In particular,
 Diaconis conjectured that the cutoff occurs
at $\frac{3}{4}n\log n$. Our main result is a lower bound of
$t_n = \left(\frac{3}{4}-o\left(1\right)\right)n\log n$, corresponding to this conjecture.

Our method is based on analysis of the positions of cards yet-to-be-removed.
We show that for large $n$ and $t_n$ as above, there exists $f(n)=\Theta(\sqrt{n\log n})$ such that,
with high probability, under both the measure induced by the shuffle and the stationary measure, the number of cards within a certain distance from their initial position
is $f(n)$ plus a lower order term.
However, under the induced measure, this lower order term is strongly influenced by
the number of cards yet-to-be-removed, and is of higher order than for
the stationary measure.

\end{abstract}

\begin{keyword}[class=AMS]
\kwd[Primary ]{60J10.}
\end{keyword}

\begin{keyword}
\kwd{Mixing-time, card shuffling, random insertions.}
\end{keyword}


\end{frontmatter}


\section{Introduction}

In the random-to-random insertions shuffle a card is chosen at random,
removed from the deck and reinserted in a random position. Assuming
the cards are numbered from $1$ to $n$, let us identify an ordered
deck with the permutation $\sigma\in S_{n}$ such that $\sigma\left(j\right)$
is the position of the card numbered $j$. The shuffling process induces
a random walk $\Pi_{t}$, $t=0,1,\ldots$, on $S_{n}$. Let $\PP_{\sigma}^{n}$
be the probability measure corresponding to the random walk starting
from $\sigma\in S_{n}$.

Clearly, $\Pi_{t}$ is an irreducible and aperiodic Markov chain.
Therefore \\ $\PP_{\sigma}^{n}\left(\Pi_{t}\in\cdot\right)$ converges,
as $t\rightarrow\infty$, to the stationary measure $\UU^{n}$, which, since the transition matrix
is symmetric, is the uniform measure on $S_{n}$.
To quantify the distance from stationarity,
one usually uses the total variation (TV) distance
\[
d_{n}\left(t\right)\triangleq\max_{\sigma\in S_{n}}\left\Vert \PP_{\sigma}^{n}\left(\Pi_{t}\in\cdot\right)-\UU^{n}\right\Vert _{TV}=\left\Vert \PP_{id}^{n}\left(\Pi_{t}\in\cdot\right)-\UU^{n}\right\Vert _{TV},
\]
where equality follows since the chain is transitive. The mixing time
is then defined by
\[
t_{mix}^{\left(n\right)}\left(\varepsilon\right)\triangleq\min\left\{ t:\, d_{n}\left(t\right)\leq\varepsilon\right\} .
\]
In order to study the rate of convergence to stationarity for large
$n$, one studies how the mixing time grows as $n\rightarrow\infty$.
In particular, one is interested in finding conditions on $\left(t_{n}\right)_{n=1}^{\infty}$
such that $\lim_{n\rightarrow\infty}d_{n}\left(t_{n}\right)$ equals
$0$ or $1$.

The random-to-random insertions shuffle is known to have a pre-cutoff
of order $O\left(n\log n\right)$. Namely, for $c_{1}=\frac{1}{2}$,
$c_{2}=2$:
\begin{enumerate}[(i)]
\item for any sequence of the form $t_{n}=c_{1}n\log n-k_{n}n$
with $\lim_{n\rightarrow\infty}k_{n}=\infty$, $\lim_{n\rightarrow\infty}d_{n}\left(t_{n}\right)=1$; and
\item for any sequence of the form $t_{n}=c_{2}n\log n+k_{n}n$
with $\lim_{n\rightarrow\infty}k_{n}=\infty$, $\lim_{n\rightarrow\infty}d_{n}\left(t_{n}\right)=0$.
\end{enumerate}
Diaconis and Saloff-Coste \cite{Diaconis93comparisontechniques} showed that the mixing time is
 of order $O\left(n\log n\right)$. Uyemura-Reyes \cite{Uyemura-Reyes} used
a comparison technique from \cite{Diaconis93comparisontechniques}
to show that the upper bound above holds with $c_{2}=4$ and proved
the lower bound with $c_{1}=\frac{1}{2}$ by studying the longest
increasing subsequence. In \cite{Saloff-Coste}  the upper bound is
improved by Saloff-Coste and Z\'u\~niga, also by applying a comparison technique, and shown to hold
with $c_{2}=2$. An alternative proof to the lower bound with $c_{1}=\frac{1}{2}$
is also given there.

A long standing open problem is to prove the existence of a cutoff
in TV (see \cite{Diaconis03mathematicaldevelopments,Diaconis94randomwalks}); that is, a value $c$ such that for any $\varepsilon>0$:
\begin{enumerate}[(i)]
\item for any sequence $t_{n}\leq\left(c-\varepsilon\right)n\log n$, $\lim_{n\rightarrow\infty}d_{n}\left(t_{n}\right)=1$; and
\item for any sequence $t_{n}\geq\left(c+\varepsilon\right)n\log n$,
$\lim_{n\rightarrow\infty}d_{n}\left(t_{n}\right)=0$.
 \end{enumerate}
In particular, in \cite{Diaconis03mathematicaldevelopments} Diaconis
conjectured that there is a cutoff at $\frac{3}{4}n\log n$.

Our main result is a lower bound on the mixing time with this rate.
\begin{thm}
\label{thm:mixing}Let $t_{n}=\frac{3}{4}n\log n-\frac{1}{4}n\log\log n-c_{n}n$ be a sequence of natural numbers
with $\lim_{n\rightarrow\infty}c_{n}=\infty$. Then $\lim_{n\rightarrow\infty}d_{n}\left(t_{n}\right)=1$.
\end{thm}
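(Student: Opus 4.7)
My plan is to prove the lower bound by the standard distinguishing-statistic method. I take
\[
T(\sigma):=\#\bigl\{j\in\{1,\dots,n\}:\,|\sigma(j)-j|\leq d_n\bigr\}
\]
with $d_n:=C\sqrt{n\log n}$ for a constant $C>0$, and show that $T(\Pi_{t_n})$ under $\PP_{id}^n$ and $T$ under $\UU^n$ concentrate at values whose separation exceeds both standard deviations by a factor tending to $\infty$. The heuristic is that both measures give $\EE[T]=2d_n(1+o(1))=\Theta(\sqrt{n\log n})$ to leading order, but under $\PP_{id}^n$ the cards not yet removed by time $t_n$ stay close to their initial positions and contribute a positive excess. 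Since the expected number of such cards equals $n(1-1/n)^{t_n}\sim n^{1/4}(\log n)^{1/4}e^{c_n}\to\infty$, this excess dwarfs the fluctuations, which will turn out to be of order $n^{1/4}(\log n)^{1/4}$.

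The computation of $\EE_{\UU^n}[T]=2d_n+O(d_n^2/n)$ and $\VVar_{\UU^n}[T]=O(d_n)$ is routine from the two-point probabilities of a uniform permutation. The core work is the analogous analysis under $\PP_{id}^n$. Let $N_t\subset\{1,\dots,n\}$ be the set of yet-to-be-removed cards and $K_t:=|N_t|$; a negative-association argument gives that $K_{t_n}$ concentrates around $\EE[K_{t_n}]\sim n^{1/4}(\log n)^{1/4}e^{c_n}$. Conditional on $j\in N_{t_n}$, the position process $s\mapsto\Pi_s(j)$ performs a nearest-neighbour walk on $\{1,\dots,n\}$ whose one-step law, read off the uniform choices of the card to be removed and of its re-insertion position, is
\[
\PP[\Delta=+1\mid P=p]=\tfrac{p(n-p)}{n(n-1)},\qquad\PP[\Delta=-1\mid P=p]=\tfrac{(p-1)(n-p+1)}{n(n-1)},
\]
so the per-step mean is $(n+1-2p)/[n(n-1)]$ (a centring drift of order $1/n$) and the per-step variance is $\tfrac12(1+o(1))$ at middle positions. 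A quantitative CLT then yields $\PP[\,|\Pi_{t_n}(j)-j|\leq d_n\mid j\in N_{t_n}\,]\to p_C:=2\Phi(C\sqrt{8/3})-1>0$ for $j$ with $\min(j,n-j)=\omega(1)$. Summing, $\EE_{\PP}[T(\Pi_{t_n})]=2d_n(1+o(1))+p_C\,\EE[K_{t_n}](1+o(1))$, so the excess over $\EE_{\UU^n}[T]$ is of order $n^{1/4}(\log n)^{1/4}e^{c_n}$.

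For the variance under $\PP_{id}^n$ I split $T=T_R+T_{NR}$ according to whether $j\in N_{t_n}$. Pairwise covariances for $T_{NR}$ are handled by computing the joint one-step law of $(\Delta_j,\Delta_{j'})$ for never-removed $j<j'$, which again is an explicit rational function of the current positions and $n$, and feeding it into a joint CLT for $(\Pi_{t_n}(j),\Pi_{t_n}(j'))$; this should yield $\VVar[T_{NR}]=O(\EE[K_{t_n}])$. A uniform-type estimate, conditional on $N_{t_n}$ and the positions of its members, gives $\VVar[T_R]=O(d_n)$. The total standard deviation is thus $O(\sqrt{d_n+\EE[K_{t_n}]})=O(n^{1/4}(\log n)^{1/4}(1+e^{c_n/2}))$, still smaller than the bias by a factor $\Omega(e^{c_n/2})\to\infty$. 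Applying Chebyshev to both measures with the threshold $A=\{\sigma:T(\sigma)\geq 2d_n+\tfrac12 p_C\EE[K_{t_n}]\}$ gives $\UU^n(A)=o(1)$ and $\PP_{id}^n(\Pi_{t_n}\in A)=1-o(1)$, whence $d_n(t_n)\to 1$.

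I expect the main obstacle to be the joint quantitative CLT in the second step. The walk $\Pi_s(j)$ is time-inhomogeneous (its step law depends on the current position), lives on a bounded interval, and must be controlled under the conditioning that $j$ is never chosen. Extending this to the joint law of two walks $\Pi_s(j),\Pi_s(j')$ that share the same driving choices adds a further difficulty, and it is the accurate handling of boundary effects together with the pairwise covariances of the indicators $\mathbf{1}\{|\Pi_{t_n}(j)-j|\leq d_n\}$ that will ultimately determine whether the variance bound for $T_{NR}$ closes with the right constant.
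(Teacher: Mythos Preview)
Your overall strategy matches the paper's exactly: use the count of cards within $C\sqrt{n\log n}$ of their initial position as the distinguishing statistic, split it according to whether the card has been removed, and apply Chebyshev on both sides. The orders of magnitude you state are correct. But you have the two technical difficulties reversed.

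For $T_{NR}$ no joint CLT is needed. The paper uses only the crude bound $\EE[T_{NR}^{2}]\le\EE[K_{t_n}^{2}]\le K+K^{2}$ with $K=\EE[K_{t_n}]$, which together with $\EE[T_{NR}]\ge cK$ yields $\VVar[T_{NR}]\le K+(1-c^{2})K^{2}$, where $c=\min_{j}\PP(|\Pi_{t_n}(j)-j|\le d_n\mid j\in N_{t_n})$ comes from the one-card CLT. This is useful only when $c$ is close to $1$, and the paper arranges that by sending $C\to\infty$ at the very end of the argument. So the two-dimensional CLT you flag as the main obstacle is simply not required. The real work is the variance bound for $T_{R}$, which you dismiss with ``a uniform-type estimate, conditional on $N_{t_n}$ and the positions of its members''. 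That conditional uniformity is not obvious, and the paper does not claim it; instead the paper conditions on the \emph{last removal times} $\tau_i^{t},\tau_j^{t}$ of two already-removed cards $i,j$ and bounds
\[
\PP_{id}^{n}\bigl(\Pi_{t}(i)\in\overline{i\pm\delta},\ \Pi_{t}(j)\in\overline{j\pm\delta}\bigm|\tau_i^{t}=t_1,\ \tau_j^{t}=t_2\bigr)
\]
directly, via a coupling of two decks that controls how the relative positions of $i$ and $j$ evolve after time $t_2$. This occupies the longest section of the paper and is the step you should expect to spend your effort on. (A minor point: the paper restricts the statistic to cards in a central window $D^{n}=[n]\cap[n(1-\varepsilon)/2,\,n(1+\varepsilon)/2]$, so that the per-step variance of an unremoved card's walk is uniformly close to $\tfrac12$ and boundary cards need no special treatment.)
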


The proof is based on analysis of the distribution of the positions
of cards yet-to-be-removed. Let $\left[n\right]=\left\{ 1,\ldots,n\right\} $,
and denote the set of cards that have not been chosen for removal
and reinsertion up to time $t$ by $A^{t}=A^{n,t}$. The following
result describes the limiting distribution for a card in $A^{t}$
as the size of the deck grows (in the sense below).

Recall that for a permutation $\sigma \in S^n$, the image of $j$ under $\sigma$, $\sigma(j)$, is the position of card $j$ in the deck with corresponding ordering. Hence, for the random walk $\Pi_t$, $\Pi_t(j)$ corresponds to the position of card $j$ after $t$ random-to-random insertion shuffles.
Let $\Rightarrow$ denote weak convergence and $N\left(0,1\right)$
denote the standard normal distribution.
\begin{thm}
\label{thm:Gaussian delocalization}Let $j_{n}\in\left[n\right]$
and $t_{n}\in\mathbb{N}$ be sequences. Assume that $\gamma\triangleq\lim_{n\rightarrow\infty}\frac{j_{n}}{n}$
exists, and that $$\lim_{n\rightarrow\infty}\frac{n^{2}}{t_{n}j_{n}\left(n-j_{n}\right)}=\lim_{n\rightarrow\infty}\frac{t_{n}}{j_{n}\left(n-j_{n}\right)}=0.$$
 Then
\[
\PP_{{\scriptstyle id}}^{n}\left(\left.\frac{\Pi_{t_{n}}\left(j_{n}\right)-j_{n}}{\sqrt{2t_{n}\lambda_{n}}}\in\cdot\,\right|j_{n}\in A^{t_{n}}\right)\Longrightarrow \PP\left(N\left(0,1\right) \in \cdot \right),
\]
where
\[
\lambda_{n}=\begin{cases}
\frac{j_{n}}{n} & \mbox{if }\gamma=0,\\
\frac{n-j_{n}}{n} & \mbox{if }\gamma=1,\\
\gamma\left(1-\gamma\right) & \mbox{if }\gamma\in\left(0,1\right).
\end{cases}
\]

\end{thm}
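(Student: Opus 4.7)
My first move is to recognise that under the conditional measure $\PP_{\mathrm{id}}^n(\,\cdot\mid j_n\in A^{t_n})$ the scalar process $Y_t := \Pi_t(j_n)$ is itself a time-homogeneous Markov chain on $[n]$. Indeed, conditioning on $\{j_n\in A^{t_n}\}$ amounts to forcing, at each step, an independent pair consisting of a card $k$ uniform on $[n]\setminus\{j_n\}$ and a reinsertion position $r$ uniform on $[n]$. Given any $\Pi_{t-1}$ with $\Pi_{t-1}(j_n)=p$, the position $q=\Pi_{t-1}(k)$ is uniform on $[n]\setminus\{p\}$, and a short case analysis on the sign of $q-p$ and of $r-p$ yields
\begin{equation*}
\PP(Y_t=p+1\mid Y_{t-1}=p)=\frac{p(n-p)}{n(n-1)},\qquad \PP(Y_t=p-1\mid Y_{t-1}=p)=\frac{(p-1)(n-p+1)}{n(n-1)},
\end{equation*}
with the remainder on $Y_t=p$. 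From these one reads off the one-step drift $\mu(p)=(n+1-2p)/(n(n-1))=O(1/n)$ and the conditional second moment $2p(n-p)/(n(n-1))+O(1/n)$, which equals $2\lambda_n(1+o(1))$ whenever $p=j_n+o(\min(j_n,n-j_n))$.

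The second step is a localization estimate. I would form the Doob decomposition $Y_{t_n}-j_n = M_{t_n}+A_{t_n}$ with $A_{t_n}=\sum_{s=1}^{t_n}\mu(Y_{s-1})$, note the deterministic bound $|A_{t_n}|\le t_n/n$, and run a bootstrap on the first exit time $\tau_L=\inf\{s:|Y_s-j_n|\ge L\}$: on $\{s<\tau_L\}$ the conditional variance of $\Delta M_s$ is at most $2\lambda_n(1+o(1))$, so $\EE[M_{t_n\wedge \tau_L}^2]\le C t_n\lambda_n$, and Doob's maximal inequality gives $\sup_{s\le t_n}|Y_s-j_n|=O_P(\sqrt{t_n\lambda_n}+t_n/n)$. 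Using $\lambda_n \asymp j_n(n-j_n)/n^2$ in all three regimes $\gamma\in(0,1)$, $\gamma=0$, $\gamma=1$, the two hypotheses $t_n/(j_n(n-j_n))\to 0$ and $n^2/(t_n j_n(n-j_n))\to 0$ force both terms to be $o(\min(j_n,n-j_n))$. Consequently, with probability tending to $1$, $Y_s(n-Y_s)=j_n(n-j_n)(1+o(1))$ uniformly over $s\le t_n$.

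Third, on the localization event the conditional variance of $\Delta M_s$ is $2\lambda_n(1+o(1))$, the increments are bounded by $2$, and therefore
\begin{equation*}
\sum_{s=1}^{t_n}\EE\!\left[(\Delta M_s)^2\,\middle|\,\mathcal F_{s-1}\right] = 2t_n\lambda_n\,(1+o_P(1)),
\end{equation*}
while the conditional Lindeberg condition is automatic. The Hall--Heyde martingale central limit theorem then yields $M_{t_n}/\sqrt{2t_n\lambda_n}\Rightarrow N(0,1)$. A regime-by-regime check shows that on the localization event $|A_{t_n}|= \Theta(t_n|n-2j_n|/n^2)$, and the inequalities $t_n|n-2j_n|^2/n^4 = o(\lambda_n)$ follow in each case from $t_n/(j_n(n-j_n))\to 0$, so that $A_{t_n}/\sqrt{2t_n\lambda_n}\to 0$ in probability. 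Combining the two contributions gives the stated Gaussian limit.

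The main obstacle, as I see it, is the uniform treatment of the three regimes: both the drift $\mu(p)$ and the step variance degenerate as $\gamma\to 0$ or $1$, and it is precisely the joint force of the two hypotheses that saves the argument. The first hypothesis keeps the accumulated drift and any boundary effects on a scale smaller than the diffusive scale $\sqrt{t_n\lambda_n}$, while the second hypothesis, equivalent to $t_n\lambda_n\to\infty$, ensures that the martingale CLT has enough effective quadratic variation to fire. The bookkeeping in the boundary cases $\gamma\in\{0,1\}$, where the natural scale $\sqrt{t_n\lambda_n}$ is itself delicate, is where I expect the most careful calculations to be required.
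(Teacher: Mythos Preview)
Your argument is correct and complete in outline; it is, however, a genuinely different route from the paper's. The paper does not use the Doob decomposition or a martingale CLT. Instead, it introduces an auxiliary chain $\zeta_m$ whose transitions agree with those of $\Pi_m(j_n)$ inside a window $\overline{j_n\pm M}$ and are frozen to the $j_n$-transitions outside it, and then writes $\zeta_m-j_n=S_m+X_m+Y_m$ where $S_m$ is a \emph{symmetric} random walk with step probabilities $q_{j_n}=\min(p^{\Pi(j)}_{j_n,j_n\pm1})$, $X_m$ absorbs the spatial inhomogeneity of the step variance, and $Y_m$ absorbs the drift. The CLT then comes from Berry--Esseen applied to $S_{t_n}$, while $X$ and $Y$ are controlled by Proposition~2.1 (a simple domination lemma) and Chebyshev/L\'evy maximal inequalities. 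The scale choices $M_n,\delta_n$ play the role of your localization parameter $L$.

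What each approach buys: your martingale route is shorter and conceptually cleaner---the single deterministic bound $|A_{t_n}|\le t_n/(n-1)$ together with $t_n/(j_n(n-j_n))\to 0$ already gives $A_{t_n}/\sqrt{2t_n\lambda_n}\to 0$, so your regime-by-regime check of the drift is in fact unnecessary. The paper's hands-on decomposition is more elementary (no Hall--Heyde) and, more importantly for the rest of the paper, the explicit pieces $S,X,Y$ and their tail bounds are recycled verbatim to prove Theorem~2.2, the \emph{uniform} tail estimate over all $j\in[n]$ that feeds into Lemma~3.3. Your localization estimate is morally the same object, but packaging it for reuse would require a small amount of extra work.
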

This can be explained by the following heuristic. Conditioned on $j\in A^{t}$, $\Pi_m(j)-j$, $m=0,1,\ldots,t$, is
a Markov chain starting at $0$ with increments in $\{0,\pm1\}$. If the increments were independent and identically distributed as the first increment, Theorem \ref{thm:Gaussian delocalization} would have readily followed from Lindeberg's central limit theorem for triangular arrays (\cite{Billingsley95}, Theorem 27.2). While this is not the case, if with high probability the conditional increment distributions (given in \eqref{eq:48} below),
$$\PP_{id}^{n}\left(\left.\Pi_{m+1}\left(j\right)=i+k\right|\Pi_{m}\left(j\right)=i,\, j\in A^{t}\right),\,\,\,\,\,\,k=0,\pm1,$$
are `close enough' to be identical for all the states $\Pi_m(j)$ visits in times $m=0,1,\ldots,t$, one should expect a similar result.
This, however, follows under mild conditions on $t$
and $j$, since the conditional transition probabilities above are very close to being symmetric, and so, with high probability, $\Pi_m(j)$ remains up to time $t$ in a small neighborhood of $j$, where the transition probabilities hardly vary.

To prove the lower bound on the TV distance of $\PP_{id}^n(\Pi_{t_n} \in \cdot)$ and $\UU^n$, we study
the size of sets of the form
\[
\triangle_{\alpha}\left(\sigma\right)\triangleq\left\{ j\in D^{n}:\,\left|\sigma\left(j\right)-j\right|\leq\alpha\sqrt{n\log n}\right\} \,\,,\,\,\sigma\in S_{n},
\]
where $D^{n}=\left[n\right]\cap\left[n\left(1-\varepsilon\right)/2,n\left(1+\varepsilon\right)/2\right]$,
for fixed $\varepsilon\in\left(0,1\right)$ and a parameter $\alpha>0$.
We shall see that for $t_n$ as in Theorem \ref{thm:mixing}, as long as $\limsup c_{n}/\log n <1/4$,  $$\left|\triangle_{\alpha}\right|/\left(2\varepsilon\alpha\sqrt{n\log n}\right) \Longrightarrow 1,$$
 under both measures. However,
the deviation $\left|\triangle_{\alpha}\right|-2\varepsilon\alpha\sqrt{n\log n}$,
which for $\PP_{id}^{n}\left(\Pi_{t_{n}}\in\cdot\right)$ is strongly influenced
by $\left|\triangle_{\alpha}(\Pi_{t_n})\cap A^{t_{n}}\right|$, i.e. by the cards
yet-to-be-removed, is of different order for the two measures.

In Section \ref{sec:Position} we prove Theorem \ref{thm:Gaussian delocalization}
and other related results. We analyze the distribution of $\left|\triangle_{\alpha}(\sigma)\right|$
under $\UU^{n}$, and the distributions of $\left|\triangle_{\alpha}(\Pi_{t_n})\cap A^{t_{n}}\right|$
and $\left|\triangle_{\alpha}(\Pi_{t_n})\setminus A^{t_{n}}\right|$ under $\PP_{id}^{n}$ in Section \ref{sec:delta}.
The proof of Theorem \ref{thm:mixing}, given in Section \ref{sec:pfmix}, then easily follows.
Lastly, in Section \ref{sec:pflem} we prove a result which is used in the previous sections.

\section{\label{sec:Position}The Position of Cards Yet-to-be-Removed}

In this section we prove Theorem \ref{thm:Gaussian delocalization}
and other related results.

The increment distribution of $\Pi_{t}$ is given by
\begin{equation}
\mu\left(\tau\right)=\begin{cases}
1/n & \mbox{if }\tau=id,\\
2/n^{2} & \mbox{if }\tau=\left(i,j\right)\mbox{ with }1\leq i,j\leq n\mbox{ and }\left|i-j\right|=1,\\
1/n^{2} & \mbox{if }\tau=c_{i,j}\mbox{ with }1\leq i,j\leq n\mbox{ and }\left|i-j\right|>1,\\
0 & \mbox{otherwise,}
\end{cases}\label{eq:51}
\end{equation}
where $c_{i,j}$ is the cycle corresponding to removing the card in
position $i$ and reinserting it in position $j$, that is
\[
c_{i,j}=\begin{cases}
id & \mbox{if }i=j,\\
\left(j,j-1,\ldots,i+1,i\right) & \mbox{if }i<j,\\
\left(j,j+1,\ldots,i-1,i\right) & \mbox{if }i>j.
\end{cases}
\]

Let $2\leq n\in\mathbb{N}$ and $j\in\left[n\right]$. Under conditioning
on $\left\{ j\in A^{t}\right\} $, $\Pi_{m}\left(j\right)$, $m=0,\ldots,t$
is a time homogeneous Markov chain with transition probabilities
\begin{align}
p_{i,i+k}^{\Pi\left(j\right)} & \triangleq \PP_{{\scriptstyle id}}^{n}\left(\left.\Pi_{m+1}\left(j\right)=i+k\right|\Pi_{m}\left(j\right)=i,\, j\in A^{t}\right)\nonumber \\
 & =\begin{cases}
\frac{i\left(n-i\right)}{n\left(n-1\right)} & \mbox{if }k=+1,\\
\frac{\left(i-1\right)\left(n-i+1\right)}{n\left(n-1\right)} & \mbox{if }k=-1,\\
\frac{\left(i-1\right)^{2}+\left(n-i\right)^{2}}{n\left(n-1\right)} & \mbox{if }k=0,\\
0 & \mbox{ otherwise.}
\end{cases}\label{eq:48}
\end{align}

One of the difficulties in analyzing the chain is the fact that the
transition probabilities $p_{i,i+k}^{\Pi\left(j\right)}$ are inhomogeneous
in $i$. To overcome this, we consider a modification of the process
for which inhomogeneity is `truncated' by setting transition probabilities
far from the initial state to be identical to these in the initial
state. As we shall see, a bound on the TV distance of
the marginal distributions of the modified and original processes
is easily established.

For $j\in\left[n\right]$ and $M>0$, let $\overline{j\pm M}\triangleq\left[n\right]\cap\left[j-M,j+M\right]$,
and let $\zeta_{m}=\zeta_{m}^{n,j,M}$, $m=0,1,\ldots$ be a Markov
process starting at $\zeta_{0}=j$ with transition probabilities
$p_{i,i+k}^{\zeta,j,M}\triangleq \PP\left(\left.\zeta_{m+1}=i+k\right|\zeta_{m}=i\right)$
such that
\begin{align*}
\forall i\in\overline{j\pm M}:\,\, & p_{i,i+k}^{\zeta,j,M}=p_{i,i+k}^{\Pi\left(j\right)},\\
\forall i\in\mathbb{Z}\setminus\overline{j\pm M}:\,\, & p_{i,i+k}^{\zeta,j,M}=p_{j,j+k}^{\Pi\left(j\right)}.
\end{align*}

Clearly, for any sequence $\left(k_{m}\right)_{m=0}^{t}\in\mathbb{Z}^{t+1}$
if $\max_{0\leq m\leq t}\left|k_{m}-j\right|\leq M$ then
\begin{equation}
\PP\left(\left(\zeta_{m}\right)_{m=0}^{t}=\left(k_{m}\right)_{m=0}^{t}\right)=\PP_{{\scriptstyle id}}^{n}\left(\left.\left(\Pi_{m}\left(j\right)\right)_{m=0}^{t}=\left(k_{m}\right)_{m=0}^{t}\right|j\in A^{t}\right).\label{eq:45}
\end{equation}
Therefore, by taking complements, for any $u\leq M$
\begin{equation}
\PP_{{\scriptstyle id}}^{n}\left(\left.\max_{0\leq m\leq t}\left|\Pi_{m}\left(j\right)-j\right|>u\right|j\in A^{t}\right)=\PP\left(\max_{0\leq m\leq t}\left|\zeta_{m}-j\right|>u\right).\label{eq:31}
\end{equation}

Moreover, \eqref{eq:45} implies that for any $B\subset\mathbb{Z}^{t+1}$
\begin{align*}
 & \PP_{{\scriptstyle id}}^{n}\left(\left.\left(\Pi_{m}\left(j\right)\right)_{m=0}^{t}\in B\right|j\in A^{t}\right)-\PP\left(\left(\zeta_{m}\right)_{m=0}^{t}\in B\right)\\
 & \,\,\,\,=\PP_{{\scriptstyle id}}^{n}\left(\left.\left(\Pi_{m}\left(j\right)\right)_{m=0}^{t}\in B,\max_{0\leq m\leq t}\left|\Pi_{m}\left(j\right)-j\right|>M\right|j\in A^{t}\right)\\
 & \,\,\,\,\,\,\,\,-\PP\left(\left(\zeta_{m}\right)_{m=0}^{t}\in B,\max_{0\leq m\leq t}\left|\zeta_{m}-j\right|>M\right).
\end{align*}
Since both terms in the last equality are bounded from above by the
equal expressions of \eqref{eq:31} (and from below by zero), it follows
that

\begin{equation}
\label{eq:38}
\begin{aligned}
& \left\Vert \PP_{{\scriptstyle id}}^{n}\left(\left.\left(\Pi_{m}\left(j\right)\right)_{m=0}^{t}\in\cdot\right|j\in A^{t}\right)-\PP\left(\left(\zeta_{m}\right)_{m=0}^{t}\in\cdot\right)\right\Vert _{TV}\\
& \qquad \leq \PP\left(\max_{0\leq m\leq t}\left|\zeta_{m}-j\right|>M\right).
\end{aligned}
\end{equation}

A simple computation shows that $\left|p_{i,i+1}^{\Pi\left(j\right)}-p_{i,i-1}^{\Pi\left(j\right)}\right|$
is bounded by $\frac{1}{n}$ for any $i$. On the other hand, $p_{i,i\pm1}^{\Pi\left(j\right)}$
is roughly equal to $i\left(n-i\right)/n^{2}$. Thus if $j$ is large
enough and $M$, and thus $\left|\overline{j\pm M}\right|$, is small
compared to $j$, we can think of $\zeta_{m}^{n,j,M}$ as a perturbation
of a random walk with a very small bias. In order to make this precise,
we decompose $\zeta_{m}^{n,j,M}$ as a sum of a random walk determined
by the increment distribution in state $j$ and two additional random
processes related to the `defects' in symmetry and homogeneity in
state.

Consider the vector-valued Markov process $$\left(S_{m},X_{m},Y_{m}\right)=\left(S_{m}^{n,j,M},X_{m}^{n,j,M},Y_{m}^{n,j,M}\right)$$
starting at $\left(S_{0},X_{0},Y_{0}\right)=\left(0,0,0\right)$ with
transition probabilities as follows. For each $k\in\mathbb{Z}$ define
\begin{align}
q_{k}=&\, \min\left\{ p_{k,k+1}^{\zeta,j,M},p_{k,k-1}^{\zeta,j,M}\right\} ,\label{eq:34}\\
r_{k}=&\, \max\left\{ p_{k,k+1}^{\zeta,j,M},p_{k,k-1}^{\zeta,j,M}\right\} \nonumber.
\end{align}
For a state $\left(i_{1},i_{2},i_{3}\right)$ set $i=i_{1}+i_{2}+i_{3}$
and define
\begin{align}
\nonumber
w_{i}=&\, \arg\max_{k=\pm1}\left(p_{j+i,j+i+k}^{\zeta,j,M}\right),\\
z_{i}=&\,\, \mbox{sgn}\left(q_{j}-q_{j+i}\right),\nonumber
\end{align}
where sgn is the sign function (the definition of sgn at zero will
not matter to us). Define the transition probabilities by
\begin{align*}
 & \PP\left(\left.\left(S_{m+1},X_{m+1},Y_{m+1}\right)=\left(i_{1}+k_{1},i_{2}+k_{2},i_{3}+k_{3}\right)\right|\left(S_{m},X_{m},Y_{m}\right)=\left(i_{1},i_{2},i_{3}\right)\right)\\
 & \quad=\begin{cases}
\min\left\{ q_{j+i},q_{j}\right\}  & \mbox{if }\left(k_{1},k_{2},k_{3}\right)=\left(+1,0,0\right),\\
\min\left\{ q_{j+i},q_{j}\right\}  & \mbox{if }\left(k_{1},k_{2},k_{3}\right)=\left(-1,0,0\right),\\
\left|q_{j}-q_{j+i}\right| & \mbox{if }\left(k_{1},k_{2},k_{3}\right)=\left(+\frac{1+z_{i}}{2},-1,0\right),\\
\left|q_{j}-q_{j+i}\right| & \mbox{if }\left(k_{1},k_{2},k_{3}\right)=\left(-\frac{1+z_{i}}{2},+1,0\right),\\
r_{j+i}-q_{j+i}, & \mbox{if }\left(k_{1},k_{2},k_{3}\right)=\left(0,0,w_{i}\right),\\
c_{i}, & \mbox{if }\left(k_{1},k_{2},k_{3}\right)=\left(0,0,0\right).
\end{cases}
\end{align*}
where $c_{i}$ is chosen such that the sum of probabilities is $1$.

It is easy to verify that $\left(S_{m}+X_{m}+Y_{m}\right)_{m=0}^{\infty}$
is a Markov process with transition probabilities identical to those
of $\left(\zeta_{m}-j\right)_{m=0}^{\infty}$. Therefore the two processes
have the same law. It is also easy to check that $S_{n}$ is a random
walk with increment distribution
\[
\mu\left(+1\right)=\mu\left(-1\right)=q_{j}\,\,,\,\,\mu\left(0\right)=1-2q_{j}.
\]

In order to study $X_{m}$ and $Y_{m}$ we need the following proposition.
\begin{prop}
\label{pro:modification}Let $\left\{ A_{m}\right\} _{m=0}^{\infty}$
and $\left\{ B_{m}\right\} _{m=0}^{\infty}$ be integer-valued random
processes starting at the same point $A_{0}=B_{0}$. Suppose that
there exist $p_{ik}^{A}\in\left[0,1\right]$ such that for any $m\geq0$
and $k,i,i_{0},\ldots,i_{m-1}\in\mathbb{Z}$ (such that the conditional
probabilities are defined)
\begin{align*}
p_{ik}^{A} & =\PP\left(\left.A_{m+1}=k\right|A_{m+1}\neq i,A_{m}=i\right)\\
 & =\PP\left(\left.A_{m+1}=k\right|A_{m+1}\neq i,A_{m}=i,A_{m-1}=i_{m-1},\ldots,A_{0}=i_{0}\right)
\end{align*}
and similarly for $B_{m}$ with $p_{ik}^{B}$. Assume that for any
$i,k\in\mathbb{Z}$, $p_{ik}^{A}=p_{ik}^{B}$. Finally, suppose that
for any $m\geq0$ and $k,i,i_{0},\ldots,i_{m-1},j_{0},\ldots,j_{m-1}\in\mathbb{Z}$,
(whenever defined)
\begin{align*}
 & \PP\left(\left.A_{m+1}\neq i\right|A_{m}=i,A_{m-1}=i_{m-1},\ldots,A_{0}=i_{0}\right)\\
 & \,\,\,\,\geq \PP\left(\left.B_{m+1}\neq i\right|B_{m}=i,B_{m-1}=j_{m-1},\ldots,B_{0}=j_{0}\right).
\end{align*}
Then for any $t\in\mathbb{N}$ and $\delta>0$
\[
\PP\left(\max_{0\leq m\leq t}\left|A_{m}\right|\geq\delta\right)\geq \PP\left(\max_{0\leq m\leq t}\left|B_{m}\right|\geq\delta\right).
\]
\end{prop}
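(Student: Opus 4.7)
The plan is to prove this by constructing an explicit coupling. The hypotheses decouple two features of each process: (i) conditional on moving, the next state depends only on the current state via the common kernel $p_{ik}^{A}=p_{ik}^{B}$; (ii) the probability of moving from any site $i$ is pointwise larger for $A$ than for $B$, uniformly over past histories. Thus $A$ and $B$ should be realizable as two copies stepping along the same Markov ``skeleton'' of distinct visited states, with $A$ progressing through this skeleton no slower than $B$. Once such a construction is in hand, the sequence of sites visited by $B$ up to time $t$ will be an initial segment of the one visited by $A$, so $\{B_{0},\ldots,B_{t}\}\subseteq\{A_{0},\ldots,A_{t}\}$ and the desired inequality on maxima will be immediate.

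To build the coupling, let $(\tilde{C}_{k})_{k\geq0}$ be a Markov chain on $\mathbb{Z}$ with $\tilde{C}_{0}=A_{0}=B_{0}$ and transition kernel $p^{A}$. Introduce counters $K_{m}^{A},K_{m}^{B}\in\mathbb{Z}_{\geq0}$ with $K_{0}^{A}=K_{0}^{B}=0$ and define $A_{m}:=\tilde{C}_{K_{m}^{A}}$, $B_{m}:=\tilde{C}_{K_{m}^{B}}$. At each step $m\to m+1$, let $p_{A}=\PP(A_{m+1}\neq A_{m}\mid A_{0},\ldots,A_{m})$ and $p_{B}=\PP(B_{m+1}\neq B_{m}\mid B_{0},\ldots,B_{m})$ as prescribed by the target laws. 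If $K_{m}^{A}=K_{m}^{B}$ (so $A_{m}=B_{m}$), draw a single $U_{m}\sim\mathrm{Unif}(0,1)$ and increment $K^{A}$ iff $U_{m}<p_{A}$, $K^{B}$ iff $U_{m}<p_{B}$; hypothesis (ii) gives $p_{A}\geq p_{B}$ at the common current state, so $B$ cannot move without $A$ also moving. If $K_{m}^{A}>K_{m}^{B}$, drive the two with independent uniforms; since each counter changes by $0$ or $1$, the gap can decrease by at most one per step and so remains nonnegative. An easy induction then yields $K_{m}^{A}\geq K_{m}^{B}$ for all $m$.

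What remains is to verify that the coupled processes have the correct marginal laws, and then to conclude. For $A$: given $A_{0},\ldots,A_{m}$, the move indicator is Bernoulli with parameter $p_{A}$ by construction, and conditional on a move, the next state equals $\tilde{C}_{K_{m}^{A}+1}$, which by the Markov property of $\tilde{C}$ has the prescribed law $p^{A}_{A_{m},\cdot}$; the same verification works for $B$. Since $K_{t}^{A}\geq K_{t}^{B}$ pathwise, $\{B_{0},\ldots,B_{t}\}\subseteq\{A_{0},\ldots,A_{t}\}$, so $\max_{0\leq m\leq t}|A_{m}|\geq\max_{0\leq m\leq t}|B_{m}|$ almost surely, which yields the stated inequality between probabilities. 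The main delicate point is exactly this marginal check once $K^{A}$ and $K^{B}$ have diverged and independent randomness is driving each: one must argue that $\tilde{C}_{K_{m}^{A}+1}$ still has the right conditional distribution given only $A$'s own history, which reduces to the measurability of $(\tilde{C}_{0},\ldots,\tilde{C}_{K_{m}^{A}})$ with respect to that history together with the Markov property of $\tilde{C}$.
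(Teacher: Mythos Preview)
Your proposal is correct and follows essentially the same route as the paper: both construct a coupling in which $A$ and $B$ traverse a common jump skeleton $\{k_m\}$ with $B$ never ahead of $A$, so that $\{B_0,\ldots,B_t\}\subseteq\{A_0,\ldots,A_t\}$ pathwise. The paper's proof is a two-sentence sketch of exactly this idea; you have written out an explicit realization via the counters $K_m^A,K_m^B$ and the shared skeleton chain $\tilde C$.

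One small remark on the ``delicate point'' you flag at the end: the marginal check is actually routine for $A$ (it always uses the uniform $U_m$, and its history determines $\tilde C_0,\ldots,\tilde C_{K_m^A}$, so the Markov property of $\tilde C$ suffices directly). The genuinely subtle case is $B$'s marginal, because which uniform drives $B$ at step $m$ depends on whether $K_m^A=K_m^B$, and $K_m^A$ can depend on skeleton values beyond $\tilde C_{K_m^B}$. The clean way to dispatch this is to note that the selected uniform $W_m$ (equal to $U_m$ or $V_m$ according to a $\sigma(\tilde C,U_{<m},V_{<m})$-measurable indicator) is still uniform and independent of $\sigma(\tilde C,W_0,\ldots,W_{m-1})$; iterating gives $(W_0,\ldots,W_{m-1})\perp\tilde C$, whence $\tilde C_{K_m^B+1}$ has law $p^B_{B_m,\cdot}$ given $B$'s history. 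This is straightforward once seen, but it is the step that deserves the care, not the one you singled out.
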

\begin{proof}
The processes $\left\{ A_{m}\right\} $ and $\left\{ B_{m}\right\} $
can be coupled so that they jump from a given state to a new state
according to the same order of states, say according to the order
$\left\{ k_{m}\right\} _{m=0}^{\infty}$, and such that the amount
of time that $\left\{ B_{m}\right\} $ spends in any given state $k_{m}$
before jumping to state $k_{m+1}$ is at least as much as $\left\{ A_{m}\right\} $
spends there. The proposition follows easily from this.
\end{proof}
The only nonzero increments of $X_{m}$ are $\pm1$. Note that
\begin{align*}
 & \PP\left(X_{m+1}=i_{m}+1\Big|X_{m+1}\neq i_{m},\left\{ X_{p}\right\} _{p=0}^{m}=\left\{ i_{p}\right\} _{p=0}^{m}\right)\\
 & =\sum_{\left(k_{1},k_{2}\right)\in\mathbb{Z}^{2}}\PP\left(X_{m+1}=i_{m}+1\Big|X_{m+1}\neq i_{m},\left\{ X_{p}\right\} _{p=0}^{m}=\left\{ i_{p}\right\} _{p=0}^{m},\ldots \right. \\
 & S_{m}=k_{1}, Y_{m}=k_{2} \Big) \PP\left(S_{m}=k_{1},Y_{m}=k_{2}\Big|X_{m+1}\neq i_{m},\left\{ X_{p}\right\} _{p=0}^{m}=\left\{ i_{p}\right\} _{p=0}^{m}\right)  \\
 & =\frac{1}{2}.
\end{align*}
The last equality follows from Markov property of $\left(S_{m},X_{m},Y_{m}\right)$.
The same, of course, holds for the negative increment. In addition,
again by Markov property,
\begin{align*}
 & \PP\left(\left.X_{m+1}\neq i_{m}\right|\left\{ X_{p}\right\} _{p=0}^{m}=\left\{ i_{p}\right\} _{p=0}^{m}\right)\\
 & \,\,\,\,=\sum_{\left(k_{1},k_{2}\right)\in\mathbb{Z}^{2}}\PP\left(X_{m+1}\neq i_{m}\Big|\left\{ X_{p}\right\} _{p=0}^{m}=\left\{ i_{p}\right\} _{p=0}^{m},S_{m}=k_{1},Y_{m}=k_{2}\right)\times\\
 & \,\,\,\,\,\,\,\, \PP\left(S_{m}=k_{1},Y_{m}=k_{2}\Big|\left\{ X_{p}\right\} _{p=0}^{m}=\left\{ i_{p}\right\} _{p=0}^{m}\right)\\
 & \,\,\,\,\leq\max_{k_{1},k_{2}}\PP\left(X_{m+1}\neq i_{m}\Big|X_{m}=i_{m},S_{m}=k_{1},Y_{m}=k_{2}\right)\leq2\max_{i\in\overline{j\pm M}}\left|q_{i}-q_{j}\right|\\
 & \,\,\,\,\leq2M\max_{x\in\left[1,n\right]}\left(\max\left\{ \left|\frac{d}{dx}\frac{x\left(n-x\right)}{n\left(n-1\right)}\right|,\left|\frac{d}{dx}\frac{\left(x-1\right)\left(n-x+1\right)}{n\left(n-1\right)}\right|\right\} \right)\\
 & \,\,\,\,\leq\frac{2M}{n-1},
\end{align*}
where the maximum in the first inequality is over all $k_{1},k_{2}$
such that the conditional probability is defined.

Thus, according to Proposition \ref{pro:modification}, for $\delta>0$,
\begin{equation}
\PP\left(\max_{0\leq m\leq t}\left|X_{m}^{n,j,M}\right|\geq\delta\right)\leq \PP\left(\max_{0\leq m\leq t}\left|W_{m}^{n,M}\right|\geq\delta\right),\label{eq:43}
\end{equation}
where $W_{m}=W_{m}^{n,M}$ is a random walk starting at $0$ with
increment distribution
\[
\nu\left(+1\right)=\nu\left(-1\right)=\frac{M}{n-1}\,\,,\,\,\nu\left(0\right)=1-2\frac{M}{n-1}.
\]

Similarly, for the process $\widetilde{Y}_{t}=\sum_{m=1}^{t}\left|Y_{m}-Y_{m-1}\right|$,
whose increments are $0$ and $1$, we have
\begin{align*}
 & \PP\left(\left.\widetilde{Y}_{m+1}=i_{m}+1 \, \right|\left\{ \widetilde{Y}_{p}\right\} _{p=0}^{m}=\left\{ i_{p}\right\} _{p=0}^{m}\right)\\
 & \,\,\,\,\leq\max_{k_{1},k_{2},k_3}\PP\left(\left.Y_{m+1}\neq k_1\right|Y_{m}=k_1,S_{m}=k_2,X_{m}=k_3\right)\\
 & \,\,\,\,\leq\max_{i\in\mathbb{Z}}\left(r_{j+i}-q_{j+i}\right)=\max_{i\in\overline{j\pm M}}\left|\frac{i\left(n-i\right)}{n\left(n-1\right)}-\frac{\left(i-1\right)\left(n-i+1\right)}{n\left(n-1\right)}\right|\\
 & \,\,\,\,=\max_{i\in\overline{j\pm M}}\left|\frac{n-2i+1}{n\left(n-1\right)}\right|\leq\frac{1}{n}.
\end{align*}
Therefore, for $\delta>0$,
\begin{equation}
\PP\left(\max_{0\leq m\leq t}\left|Y_{m}^{n,j,M}\right|\geq\delta\right)\leq \PP\left(\widetilde{Y}_{t}\geq\delta\right)\leq \PP\left(N_{t}^{n}\geq\delta\right),\label{eq:44}
\end{equation}
where $N_{t}=N_{t}^{n}\sim\mbox{Bin}\left(t,\frac{1}{n}\right)$.

Since the increment distributions of $W_{m}$ and $S_{m}$ are symmetric,
the classical Lévy inequality (\cite{pPET95a}, Theorem 2.2)  yields, for any $\delta>0$,
\begin{equation}
\PP\left(\max_{0\leq m\leq t}\left|W_{m}\right|\geq\delta\right)\leq4\PP\left(W_{t}\geq\delta\right).\label{eq:40}
\end{equation}
and
\begin{equation}
\PP\left(\max_{0\leq m\leq t}\left|S_{m}\right|\geq\delta\right)\leq4\PP\left(S_{t}\geq\delta\right).\label{eq:46}
\end{equation}

Having established the connections between the different processes,
we are now ready to prove Theorem \ref{thm:Gaussian delocalization}.

\begin{proof}
(Theorem \ref{thm:Gaussian delocalization}) The case where $\gamma=1$ follows by symmetry from the case with
$\gamma=0$. Assume $\gamma\in\left[0,1\right)$. In this case, the
hypothesis in the theorem are equivalent to $$\lim_{n\rightarrow\infty}\frac{n}{t_{n}j_{n}}=\lim_{n\rightarrow\infty}\frac{t_{n}}{nj_{n}}=0.$$

Let $n\in\mathbb{N}$, $j\in\left[n\right]$ and $M>0$. Based on \eqref{eq:43}-\eqref{eq:40} and a union bound, for $u\in\mathbb{R}$,
$\delta>0$, we have
\begin{align*}
& \PP\left(\zeta_{t}-j\geq u\right) \\
& \quad \leq \PP\left(S_{t}\geq u-\delta\right)+\PP\left(\max_{0\leq m\leq t}\left|X_{m}\right|\geq\frac{\delta}{2}\right)+\PP\left(\max_{0\leq m\leq t}\left|Y_{m}\right|\geq\frac{\delta}{2}\right)\\
 & \quad \leq \PP\left(S_{t}\geq u-\delta\right)+4\PP\left(W_{t}\geq\frac{\delta}{2}\right)+\PP\left(N_{t}\geq\frac{\delta}{2}\right).
\end{align*}
Similarly,
\begin{align*}
& \PP\left(\zeta_{t}-j\geq u\right) \\
 & \quad \geq \PP\left(S_{t}\geq u+\delta\right)-\PP\left(\max_{0\leq m\leq t}\left|X_{m}\right|\geq\frac{\delta}{2}\right)-\PP\left(\max_{0\leq m\leq t}\left|Y_{m}\right|\geq\frac{\delta}{2}\right)\\
 & \quad \geq \PP\left(S_{t}\geq u+\delta\right)-4\PP\left(W_{t}\geq\frac{\delta}{2}\right)-\PP\left(N_{t}\geq\frac{\delta}{2}\right).
\end{align*}

Assume $\frac{\delta}{2}-\frac{t}{n}>0$. By computing moments and
applying the Berry-Esseen theorem to approximate the tail probability
function of $S_{t}$, and applying Chebyshev's inequality to bound
the tail probability functions of $W_{t}$ and $N_{t}$, we arrive
at
\begin{equation}
\PP\left(\zeta_{t}^{n,j,M}-j\geq u\right)\leq\Psi\left(\frac{u-\delta}{\sqrt{2tq_{j}}}\right)+\frac{C}{\sqrt{2tq_{j}}}+\frac{32Mt}{\delta^{2}\left(n-1\right)}+\frac{\frac{t}{n}\frac{n-1}{n}}{\left(\frac{\delta}{2}-\frac{t}{n}\right)^{2}}\label{eq:35}
\end{equation}
and
\begin{equation}
\PP\left(\zeta_{t}^{n,j,M}-j\geq u\right)\geq\Psi\left(\frac{u+\delta}{\sqrt{2tq_{j}}}\right)-\frac{C}{\sqrt{2tq_{j}}}-\frac{32Mt}{\delta^{2}\left(n-1\right)}-\frac{\frac{t}{n}\frac{n-1}{n}}{\left(\frac{\delta}{2}-\frac{t}{n}\right)^{2}},\label{eq:36}
\end{equation}
where $q_{j}$ is defined in \eqref{eq:34}, $C$ is the constant
from the Berry-Esseen theorem and $\Psi$ is the tail probability
function of a standard normal variable.

For two sequences of positive numbers $v_{n},v_{n}^{\prime}$ let
us denote $v_{n}\ll v_{n}^{\prime}$ if and only if $\lim_{n\rightarrow\infty}v_{n}/v_{n}^{\prime}=0$.
By assumption, $\sqrt{\frac{t_{n}j_{n}}{n}}\ll j_{n}$, therefore
we can choose a sequence $M_{n}$ such that $\frac{t_{n}}{n}, 1\ll \sqrt{\frac{t_{n}j_{n}}{n}}\ll M_{n}\ll j_{n}$.
Similarly, since $M_{n}\ll j_{n}$ we can set $\delta_{n}$ with $\sqrt{\frac{t_{n}M_{n}}{n}}\ll\delta_{n}\ll\sqrt{\frac{t_{n}j_{n}}{n}}$,
which also implies that $\sqrt{\frac{t_{n}}{n}},\frac{t_{n}}{n}\ll\delta_{n}$.

Now, let $x\in\mathbb{R}$ and set $u_{n}=x\sqrt{2t_{n}\lambda_{n}}$.
Let us consider the inequalities derived from \eqref{eq:35} and \eqref{eq:36}
by replacing each of the parameters by a corresponding element from
the sequences above. Based on the relations established for the sequences
and the assumptions on $t_{n}$ and $j_{n}$ it can be easily verified
that, upon letting $n\rightarrow\infty$, all terms but those involving
$\Psi$ go to zero. Relying, in addition, on the fact that $\Psi$
is continuous, it can be easily verified that
\[
\lim_{n\rightarrow\infty}\Psi\left(\frac{u_{n}\pm\delta_{n}}{\sqrt{2t_{n}q_{j_{n}}}}\right)=\Psi\left(x\right).
\]
Hence we conclude that
\begin{equation}
\lim_{n\rightarrow\infty}\PP\left(\zeta_{t_{n}}^{n,j_{n},M_{n}}-j_{n}\geq u_{n}\right)=\Psi\left(x\right).\label{eq:39}
\end{equation}

Based on \eqref{eq:43}-\eqref{eq:46},
\begin{equation}
\label{eq:zetamax}
\begin{aligned}
 & \PP\left(\max_{0\leq m\leq t}\left|\zeta_{m}^{n,j,M}-j\right|\geq M\right) \leq \PP\left(\max_{0\leq m\leq t}\left|S_{m}\right|\geq M-\delta\right)\\
  & \quad \quad  +\PP\left(\max_{0\leq m\leq t}\left|Y_{m}\right|\geq\frac{\delta}{2}\right)+\PP\left(\max_{0\leq m\leq t}\left|X_{m}\right|\geq\frac{\delta}{2}\right)\\
  & \quad \leq4\PP\left(S_{t}\geq M-\delta\right)+4\PP\left(W_{t}\geq\frac{\delta}{2}\right)+\PP\left(N_{t}\geq\frac{\delta}{2}\right) \\
  &\quad  \leq\frac{8tq_{j}}{\left(M-\delta\right)^{2}}+\frac{32Mt}{\delta^{2}\left(n-1\right)}+\frac{\frac{t}{n}\frac{n-1}{n}}{\left(\frac{\delta}{2}-\frac{t}{n}\right)^{2}},
\end{aligned}
\end{equation}
where the last inequality follows from Chebyshev's inequality.

As before, consider the inequality derived from \eqref{eq:zetamax} by replacing each of the parameters by a corresponding
element from the sequences above. The middle and right-hand
side summands of \eqref{eq:zetamax} were already shown to go to zero as
$n\rightarrow\infty$. Since $\delta_{n}\ll\sqrt{\frac{t_{n}j_{n}}{n}}\ll M_{n}$
the additional term also goes to zero. Combined with \eqref{eq:38}
and \eqref{eq:39} this gives
\[
\lim_{n\rightarrow\infty}\PP_{{\scriptstyle id}}^{n}\left(\left.\Pi_{t_{n}}\left(j_{n}\right)-j_{n}\geq u_{n}\right|j_{n}\in A^{t_{n}}\right)=\Psi\left(x\right),
\]
which completes the proof.
\end{proof}

In Theorem \ref{thm:Gaussian delocalization} for each $n$ only a
single card $j_{n}$ of the deck of size $n$ is involved. The following
gives a uniform bound (in initial position and in time) for the tail
distributions of the difference from the initial position.
\begin{thm}
\label{thm:delocaliztion}Let $\alpha>0$ and let $t_{n}$ be a sequence of natural numbers
such that $\lim_{n\rightarrow\infty}t_{n}=\lim_{n\rightarrow\infty}n^{2}/t_{n}=\infty$.
Then
\[
\limsup_{n\rightarrow\infty}\max_{j\in\left[n\right]}\PP_{id}^{n}\left(\left.\max_{0\leq m\leq t_{n}}\left|\Pi_{m}\left(j\right)-j\right|>\alpha\sqrt{\frac{t_{n}}{2}} \, \right|j\in A^{t_{n}}\right)\leq4\Psi\left(\alpha\right).
\]
\end{thm}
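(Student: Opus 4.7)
The plan is to exploit the decomposition $\zeta_m^{n,j,M}-j=S_m+X_m+Y_m$ developed earlier in this section together with \eqref{eq:31}, and to combine L\'evy's inequality \eqref{eq:46} with a Berry--Esseen/Chebyshev dichotomy on the symmetric walk $S_m$. The key quantitative input is the uniform bound $q_j\le n/(4(n-1))$, which forces $\mathrm{Var}(S_{t_n})\le(1+o(1))t_n/2$ and hence produces the argument $\alpha$ inside $\Psi$. The main obstacle will be uniformity in $j\in[n]$: near the boundary $q_j$ can be arbitrarily small, so Berry--Esseen alone is useless there and must be replaced by a second-moment bound.

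I would first choose $M_n:=\alpha\sqrt{t_n/2}$ together with an auxiliary sequence $\delta_n$ satisfying $t_n^{3/4}/\sqrt{n}\ll\delta_n\ll\sqrt{t_n}$; such $\delta_n$ exists precisely because $t_n\ll n^2$. Since $u_n:=\alpha\sqrt{t_n/2}=M_n$, \eqref{eq:31} identifies the probability in question with $\PP(\max_m|\zeta_m^{n,j,M_n}-j|>u_n)$, and the union bound yields
\[
\PP\bigl(\max_m|\zeta_m-j|>u_n\bigr)\le\PP\bigl(\max_m|S_m|>u_n-\delta_n\bigr)+\PP\bigl(\max_m|X_m|>\delta_n/2\bigr)+\PP\bigl(\max_m|Y_m|>\delta_n/2\bigr).
\]
Combining \eqref{eq:43} and \eqref{eq:40} with Chebyshev bounds the $X$-term by $O(t_nM_n/(n\delta_n^2))=O(t_n^{3/2}/(n\delta_n^2))$, which tends to $0$ because $\delta_n^2\gg t_n^{3/2}/n$; similarly \eqref{eq:44} and Chebyshev bound the $Y$-term by $O((t_n/n)/\delta_n^2)$, which tends to $0$ since $\delta_n\gg\sqrt{t_n/n}$ and $\delta_n\gg t_n/n$ (both consequences of $t_n\ll n^2$).

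The content lies in the $S$-term. By \eqref{eq:46} it suffices to prove $\PP(S_{t_n}\ge u_n-\delta_n)\le\Psi(\alpha)+o(1)$ uniformly in $j$. I would fix the threshold $q^\ast:=\alpha^2\Psi(\alpha)/2$ and split into cases. If $q_j\ge q^\ast$, Berry--Esseen for the sum of i.i.d.\ centered symmetric increments with variance $2q_j$ gives
\[
\PP(S_{t_n}\ge u_n-\delta_n)\le\Psi\!\left(\frac{u_n-\delta_n}{\sqrt{2t_nq_j}}\right)+\frac{C}{\sqrt{t_nq_j}};
\]
the Berry--Esseen error is at most $C/\sqrt{t_nq^\ast}\to 0$, and since $q_j\le n/(4(n-1))$ the argument of $\Psi$ is at least $\alpha\sqrt{(n-1)/n}-\delta_n/\sqrt{2t_nq^\ast}\to\alpha$, so continuity of $\Psi$ forces the main term to tend to $\Psi(\alpha)$. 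If instead $q_j<q^\ast$, symmetry of $S_{t_n}$ and Chebyshev give
\[
\PP(S_{t_n}\ge u_n-\delta_n)\le\frac{t_nq_j}{(u_n-\delta_n)^2}\le\frac{2q^\ast}{\alpha^2}(1+o(1))=\Psi(\alpha)+o(1).
\]
Either way $\PP(S_{t_n}\ge u_n-\delta_n)\le\Psi(\alpha)+o(1)$ uniformly in $j$, and assembling the three contributions yields $\limsup_n\max_j\PP(\cdots)\le 4\Psi(\alpha)$.
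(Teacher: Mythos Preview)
Your argument is correct. You use the same overall scaffolding as the paper---identity \eqref{eq:31}, the decomposition $\zeta_m-j=S_m+X_m+Y_m$, the bounds \eqref{eq:43}, \eqref{eq:44}, \eqref{eq:40}, \eqref{eq:46}---but you handle the uniformity in $j$ for the $S$-term differently. The paper observes that the nonzero-increment probability $q_j$ of the symmetric walk $S_m^{n,j,M_n}$ is maximal at $j_n=\lfloor n/2\rfloor$ and invokes Proposition~\ref{pro:modification} to stochastically dominate $\max_m|S_m^{n,j,M_n}|$ by $\max_m|S_m^{n,j_n,M_n}|$; this reduces the uniform bound to a single $j$, for which the Berry--Esseen estimate from the proof of Theorem~\ref{thm:Gaussian delocalization} applies directly. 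You instead keep $j$ arbitrary and split on the size of $q_j$: for $q_j\ge q^\ast$ the Berry--Esseen error $C/\sqrt{t_nq_j}$ is uniformly small and the uniform bound $q_j\le n/(4(n-1))$ forces the Gaussian argument to be at least $\alpha+o(1)$; for $q_j<q^\ast$ a straight Chebyshev bound already gives $\Psi(\alpha)+o(1)$. Your route is slightly more elementary in that it avoids the coupling of Proposition~\ref{pro:modification} for the $S$-term, and your choice $M_n=u_n$ (versus the paper's $M_n\gg u_n$) streamlines the $X$-term. The paper's route, on the other hand, is shorter once Proposition~\ref{pro:modification} is in hand and recycles the analysis of Theorem~\ref{thm:Gaussian delocalization} verbatim.
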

\begin{proof}
Set $u_{n}=\alpha\sqrt{\frac{t_{n}}{2}}$ and $j_{n}=\left\lfloor \frac{n}{2}\right\rfloor $.
Let $M_{n}\geq u_{n}$ and $\delta_{n}>0$ be sequences to be determined
below. From \eqref{eq:31} it follows that
\begin{align*}
 & \max_{j\in\left[n\right]}\PP_{id}^{n}\left(\left.\max_{0\leq m\leq t_{n}}\left|\Pi_{m}\left(j\right)-j\right|>u_{n}\right|j\in A^{t_{n}}\right)\\
 & \,\,\,\,=\max_{j\in\left[n\right]}\PP\left(\max_{0\leq m\leq t_{n}}\left|\zeta_{m}^{n,j,M_{n}}-j\right|>u_{n}\right)\\
 & \,\,\,\,\leq\max_{j\in\left[n\right]}\left\{ \PP\left(\max_{0\leq m\leq t_{n}}\left|S_{m}^{n,j,M_{n}}\right|\geq u_{n}-\delta_{n}\right)+\right.\\
 & \,\,\,\,\,\,\,\,\left.\PP\left(\max_{0\leq m\leq t_{n}}\left|W_{m}^{n,M_{n}}\right|\geq\frac{\delta_{n}}{2}\right)+\PP\left(N_{t_{n}}^{n}\geq\frac{\delta_{n}}{2}\right)\right\} .
\end{align*}

It is easy to check that for the random walks $S_{m}^{n,j,M_{n}}$,
$j\in\left[n\right]$, the probabilities of the nonzero increments,
$\pm1$, are maximal when $j=j_{n}$. Therefore according to Proposition
\ref{pro:modification} and equations \eqref{eq:40} and \eqref{eq:46},
\begin{align*}
 & \max_{j\in\left[n\right]}\PP_{id}^{n}\left(\left.\max_{0\leq m\leq t_{n}}\left|\Pi_{m}\left(j\right)-j\right|>u_{n}\right|j\in A^{t_{n}}\right)\\
 & \,\,\,\,\leq \PP\left(\max_{0\leq m\leq t_{n}}\left|S_{m}^{n,j_{n},M_{n}}\right|\geq u_{n}-\delta_{n}\right)+\\
 & \qquad \PP\left(\max_{0\leq m\leq t_{n}}\left|W_{m}^{n,M_{n}}\right|\geq\frac{\delta_{n}}{2}\right)+\PP\left(N_{t_{n}}^{n}\geq\frac{\delta_{n}}{2}\right) \\
 & \,\,\,\,\leq4\left\{ \PP\left(S_{t_{n}}^{n,j_{n},M_{n}}\geq u_{n}-\delta_{n}\right)+\PP\left(W_{t_{n}}^{n,M_{n}}\geq\frac{\delta_{n}}{2}\right)+\PP\left(N_{t_{n}}^{n}\geq\frac{\delta_{n}}{2}\right)\right\} .
\end{align*}

Finally, note that $j_{n}$ and $t_{n}$ meet the conditions of Theorem
\ref{thm:Gaussian delocalization} with $\gamma=\frac{1}{2}$. Therefore,
defining $M_{n}$ and $\delta_{n}$ as in the proof of the theorem
(which also implies $M_{n}\geq u_{n}$) and following the same arguments
therein, as $n \rightarrow \infty$ the expression in the last line of the inequality above converges to
\begin{equation}
\nonumber
4\Psi\left(\lim_{n\rightarrow\infty}\frac{u_{n}-\delta_n}{\sqrt{t_{n}/2}}\right)=4\Psi\left(\alpha\right).
\end{equation}
This completes the proof.
\end{proof}

%
%
%

\section{\label{sec:delta}Cards of Distance  $O(\sqrt{n\log n})$ from their Initial Position}

The results of Section \ref{sec:Position} show that the position
of a card that has not been removed is fairly concentrated around
the initial position. This, of course, is a rare event for each card
under the uniform measure $\UU^{n}$. In this section we shall develop the tools
to exploit this to derive a lower bound for the TV distance between
$\UU^{n}$ and $\PP_{id}^{n}\left(\Pi_{t}\in\cdot\right)$ whenever sufficiently
many (in expectation) of the cards have not been removed. Here, `sufficiently many' means, of course,
that $t$ is not too large.

More precisely, we shall consider the size of sets of the form
\[
\triangle_{\alpha}\left(\sigma\right)\triangleq\left\{ j\in D^{n}:\,\left|\sigma\left(j\right)-j\right|\leq\alpha\sqrt{n\log n}\right\} \,\,,\,\,\sigma\in S_{n},
\]
where $D^{n}=\left[n\right]\cap\left[n\left(1-\varepsilon\right)/2,n\left(1+\varepsilon\right)/2\right]$,
and $\varepsilon\in\left(0,1\right)$ is arbitrary and will be fixed
throughout the proofs. Under $\UU^{n}$, for $i\neq j$, the events
$\left\{ i\in\triangle_{\alpha}\right\} $ and $\left\{ j\in\triangle_{\alpha}\right\} $
are `almost' independent, as $n\rightarrow\infty$. Therefore one
should expect $\left|\triangle_{\alpha}\right|-\EE^{\UU^n} \left\{ \left|\triangle_{\alpha}\right|\right\} $
to be of order $\left(\EE^{\UU^{n}}\left\{ \left|\triangle_{\alpha}\right|\right\} \right)^{1/2}$.
Under $\PP_{id}^{n}$, if $\left|A^{t_{n}}\right|$
is relatively small, it seems natural that the positions of the cards
that have been removed are distributed approximately as they would
under $\UU^{n}$. Thus, $\left|\triangle_{\alpha}(\Pi_{t_{n}}) \setminus A^{t_{n}}\right|$
under $\PP_{id}^{n}$ should be distributed roughly as $\left|\triangle_{\alpha}\right|$ is under $\UU^{n}$. By
this logic, we need to choose $t_{n}$ so that
 $\left|\triangle_{\alpha}(\Pi_{t_{n}})\cap A^{t_{n}}\right|$ is larger
than $\left(\EE^{\UU^{n}}\left\{ \left|\triangle_{\alpha}\right|\right\} \right)^{1/2}$
with high probability, which leads us to set $t_{n}$ to be as in Theorem \ref{thm:mixing}.

The three subsections below are devoted to separately study the distribution of $\left|\triangle_{\alpha}\right|$
under $\UU^{n}$ and the distributions of $\left|\triangle_{\alpha}(\Pi_{t_{n}})\cap A^{t_{n}}\right|$
and $\left|\triangle_{\alpha}(\Pi_{t_{n}})\setminus A^{t_{n}}\right|$ under $\PP_{id}^{n}$.

\subsection{\label{subsec:deltaU} The distribution of $\left|\triangle_{\alpha}\right|$
under $\UU^{n}$}

In this case, the first and second moments of $\left|\triangle_{\alpha}\right|$ can be easily computed in order to apply Chebyshev's inequality.
In what follows, let $R_{j}$ denote the event $\left\{ j\in\triangle_{\alpha}\right\} $.
\begin{lem}
\label{lem:delta-U}For any $\alpha$, $k>0$,
\[
\limsup_{n\rightarrow\infty}\UU^{n}\left(\left|\left|\triangle_{\alpha}\left(\sigma\right)\right|-2\varepsilon\alpha\sqrt{n\log n}\right|\geq k\sqrt{2\varepsilon\alpha}\left(n\log n\right)^{\frac{1}{4}}\right)\leq\frac{1}{k^{2}}.
\]
\end{lem}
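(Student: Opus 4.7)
The plan is to compute the first two moments of $|\triangle_\alpha|$ under $\UU^n$ and then apply Chebyshev's inequality. Write $|\triangle_\alpha(\sigma)| = \sum_{j \in D^n} \mathbf{1}_{R_j}$. Under $\UU^n$, $\sigma(j)$ is uniform on $[n]$, so $\UU^n(R_j) = |I_j|/n$ where $I_j = \{k \in [n] : |k-j| \le \alpha \sqrt{n \log n}\}$. For $n$ large enough, the window $[j - \alpha\sqrt{n\log n},\, j + \alpha\sqrt{n\log n}]$ lies inside $[1,n]$ for every $j \in D^n$, because $\alpha\sqrt{n\log n} = o(n)$ while each $j \in D^n$ is at distance at least $\varepsilon n/2 - O(1)$ from $\{0,n+1\}$. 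Hence $|I_j| = K := 2\lfloor \alpha\sqrt{n\log n}\rfloor + 1$ is the same for every $j \in D^n$, and linearity of expectation gives $\EE^{\UU^n}|\triangle_\alpha| = |D^n|K/n = 2\varepsilon\alpha\sqrt{n\log n} + O(1)$, so this mean differs from the centering constant $2\varepsilon\alpha\sqrt{n\log n}$ by $o\bigl((n\log n)^{1/4}\bigr)$.

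Next I would decompose
\[
\VVar^{\UU^n}(|\triangle_\alpha|) = \sum_{j \in D^n} \UU^n(R_j)\bigl(1-\UU^n(R_j)\bigr) + \sum_{\substack{i,j \in D^n \\ i \ne j}} \bigl[\UU^n(R_i \cap R_j) - \UU^n(R_i)\UU^n(R_j)\bigr].
\]
The diagonal piece equals $|D^n|(K/n)(1-K/n) = 2\varepsilon\alpha\sqrt{n\log n}\,(1+o(1))$. For the off-diagonal piece, I would use that $(\sigma(i),\sigma(j))$ is uniform over ordered pairs of distinct elements of $[n]$ to get
\[
\UU^n(R_i \cap R_j) - \UU^n(R_i)\UU^n(R_j) = \frac{K^2 - n\,|I_i \cap I_j|}{n^2(n-1)}.
\]
Since $|I_i \cap I_j| = \max\{K - |i-j|,\,0\}$, summing over $i \ne j \in D^n$ yields an off-diagonal contribution of order $|D^n|\bigl(|D^n|-n\bigr)K^2 / n^3 = O(\log n)$, negligible next to $\sqrt{n \log n}$. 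Hence $\VVar^{\UU^n}(|\triangle_\alpha|) = 2\varepsilon\alpha\sqrt{n\log n}\,(1+o(1))$.

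Chebyshev's inequality applied around $\EE^{\UU^n}|\triangle_\alpha|$ then yields
\[
\UU^n\!\left(\bigl||\triangle_\alpha| - \EE^{\UU^n}|\triangle_\alpha|\bigr| \ge k\sqrt{2\varepsilon\alpha}\,(n\log n)^{1/4}\right) \le \frac{1}{k^2}\,(1+o(1)),
\]
and the $O(1)$ gap between $\EE^{\UU^n}|\triangle_\alpha|$ and $2\varepsilon\alpha\sqrt{n\log n}$, being $o\bigl((n\log n)^{1/4}\bigr)$, is absorbed via the triangle inequality before taking $\limsup$. The one mildly delicate point is the off-diagonal covariance bookkeeping, but since the individual covariances are of order $1/n^2$ or $1/n^3$ depending on the overlap $|i-j|$, their aggregate contribution is only $O(\log n)$, so the variance is dominated by the diagonal term and nothing can derail the argument.
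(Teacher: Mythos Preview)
Your proof is correct and follows the same route as the paper: compute the mean and variance of $|\triangle_\alpha|$ under $\UU^n$ and apply Chebyshev. The only difference is cosmetic: the paper skips the exact covariance computation and simply bounds $\UU^n(R_i\cap R_j)\le (1+2\lfloor\alpha\sqrt{n\log n}\rfloor)^2/\bigl(n(n-1)\bigr)$, obtaining $\VVar^{\UU^n}\{|\triangle_\alpha|\}\le 2\varepsilon\alpha\sqrt{n\log n}+4\varepsilon^2\alpha^2\log n+O(1)$, whereas you track the overlaps $|I_i\cap I_j|$ exactly; both give a variance asymptotic to $2\varepsilon\alpha\sqrt{n\log n}$ and the same Chebyshev bound.
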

\begin{proof}
Suppose $n$ is large enough so that $n\left(1-\varepsilon\right)/2\geq\alpha\sqrt{n\log n}$.
Then
\begin{align*}
\EE^{\UU^{n}}\left\{ \left|\triangle_{\alpha}\left(\sigma\right)\right|\right\}  & =\sum_{j\in D^{n}}\UU^{n}\left(R_{j}\right)=\left|D^{n}\right|\frac{1+2\left\lfloor \alpha\sqrt{n\log n}\right\rfloor }{n}\\
 & =2\varepsilon\alpha\sqrt{n\log n}+O\left(1\right).
\end{align*}

The second moment satisfies the bound
\begin{align*}
\EE^{\UU^{n}}\left\{ \left|\triangle_{\alpha}\left(\sigma\right)\right|^{2}\right\}  & =\sum_{j\in D^{n}}\UU^{n}\left(R_{j}\right)+\sum_{i,j\in D^{n}:i\neq j}\UU^{n}\left(R_{i}\cap R_{j}\right)\\
 & \leq \EE^{\UU^{n}}\left\{ \left|\triangle_{\alpha}\left(\sigma\right)\right|\right\} +\left|D^{n}\right|^{2}\frac{\left(1+2\left\lfloor \alpha\sqrt{n\log n}\right\rfloor \right)^{2}}{n\left(n-1\right)}\\
 & =\EE^{\UU^{n}}\left\{ \left|\triangle_{\alpha}\left(\sigma\right)\right|\right\} +\frac{n}{n-1}\left(\EE^{\UU^{n}}\left\{ \left|\triangle_{\alpha}\left(\sigma\right)\right|\right\} \right)^{2},
\end{align*}
which implies
\begin{equation}
 \VVar^{\UU^{n}}\left\{ \left|\triangle_{\alpha}\left(\sigma\right)\right|\right\}\leq2\varepsilon\alpha\sqrt{n\log n}+4\varepsilon^{2}\alpha^{2}\log n+O\left(1\right).\nonumber
\end{equation}

Applying Chebyshev's inequality and letting $n\rightarrow\infty$
yields the required result.
\end{proof}

\subsection{\label{subsec:deltaP1} The distribution of $\left|\triangle_{\alpha}(\Pi_{t_{n}})\cap A^{t_{n}}\right|$
 under $\PP_{id}^{n}$}

We begin with the following lemma which, when combined with Theorem \ref{thm:delocaliztion}, yields a bound on the probability
that $\left|\triangle_{\alpha}(\Pi_{t_n})\cap A^{t_{n}}\right|$ is less than a fraction of its expectation.
This bound is the content of Lemma \ref{lem:delta-A} which takes up the rest of the subsection.

\begin{lem}
\label{lem:moment}Let $n,t\in\mathbb{N}$, let $B\subset\left[n\right]$
be a random set, and let $D\subset\left[n\right]$ be a deterministic
set. Suppose that for some $c>0$
\[
\min_{j\in D}\PP_{id}^{n}\left(\left.j\in B\right|j\in A^{t}\right)\geq c.
\]
Then, denoting $K=\EE_{id}^{n}\left|D\cap A^{t}\right|$, for any $r\in\left(0,1\right)$,
\begin{align*}
 \PP_{id}^{n}\left(\left|B\cap D\cap A^{t}\right|\leq r\cdot \EE_{id}^{n}\left\{ \left|B\cap D\cap A^{t}\right|\right\} \right) \leq\frac{K+\left(1-c^{2}\right)K^{2}}{\left(1-r\right)^{2}c^{2}K^{2}}.
\end{align*}
\end{lem}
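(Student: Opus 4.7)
The plan is a second moment / Chebyshev argument applied to the counting random variable
$$X = |B \cap D \cap A^t| = \sum_{j \in D} \mathbf{1}_{\{j \in B \cap A^t\}}.$$
Writing $\alpha_j = \PP_{id}^n(j \in A^t)$ and $\beta_j = \PP_{id}^n(j \in B \cap A^t)$, the hypothesis gives $\beta_j \geq c\,\alpha_j$ for all $j \in D$, hence
$$\EE_{id}^n X \;=\; \sum_{j \in D} \beta_j \;\geq\; c\sum_{j\in D}\alpha_j \;=\; cK.$$
Consequently, if I can show $\VVar_{id}^n X \leq K + (1-c^2)K^2$, then Chebyshev's inequality yields
$$\PP_{id}^n(X \leq r\,\EE X) \;\leq\; \PP_{id}^n(|X-\EE X| \geq (1-r)\EE X) \;\leq\; \frac{\VVar X}{(1-r)^2(\EE X)^2} \;\leq\; \frac{K+(1-c^2)K^2}{(1-r)^2 c^2 K^2},$$
which is the desired bound.

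For the variance I split
$$\VVar X \;=\; \sum_{j\in D}\beta_j(1-\beta_j) \;+\; \sum_{i\neq j \in D}\bigl[\PP_{id}^n(i,j\in B\cap A^t)-\beta_i\beta_j\bigr].$$
The diagonal term is at most $\sum_{j\in D}\beta_j = \EE X \leq K$. For the off-diagonal terms I would bound the joint probability by dropping $B$, i.e.\ $\PP_{id}^n(i,j \in B\cap A^t) \leq \PP_{id}^n(i,j\in A^t)$, and then use the key observation that the events $\{j\in A^t\}$ are \emph{negatively correlated}: since at each step a card is chosen uniformly,
$$\PP_{id}^n(i,j\in A^t) = \left(1-\tfrac{2}{n}\right)^t \;\leq\; \left(1-\tfrac{1}{n}\right)^{2t} = \alpha_i\alpha_j\quad (i\neq j).$$
Combined with $\beta_i\beta_j \geq c^2\alpha_i\alpha_j$, each off-diagonal summand is bounded by $(1-c^2)\alpha_i\alpha_j$, and
$$\sum_{i\neq j\in D}(1-c^2)\alpha_i\alpha_j \;\leq\; (1-c^2)\Bigl(\sum_{j\in D}\alpha_j\Bigr)^{\!2} = (1-c^2)K^2.$$

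Adding the two pieces gives $\VVar X \leq K + (1-c^2)K^2$, completing the argument. The only nontrivial step is the covariance estimate, which rests on the negative correlation of the survival events $\{j\in A^t\}$; everything else is a clean application of Chebyshev.
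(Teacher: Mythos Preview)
Your proof is correct and follows essentially the same approach as the paper: lower-bound $\EE X$ by $cK$, bound $\EE X^2$ (equivalently, $\VVar X$) using the inclusion $B\cap D\cap A^t\subset D\cap A^t$ together with the negative-correlation inequality $(1-2/n)^t\le(1-1/n)^{2t}$, and apply Chebyshev. The paper organizes the bound as $\EE X^2\le K+K^2$ and then subtracts $(\EE X)^2\ge c^2K^2$, whereas you split the variance into diagonal and off-diagonal pieces; the computations are equivalent.
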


\begin{proof}
By our assumption,
\[
\EE_{id}^{n}\left\{ \left|B\cap D\cap A^{t}\right|\right\} =\sum_{j\in D}\PP_{id}^{n}\left(\left.j\in B\right|j\in A^{t}\right)\PP_{id}^{n}\left(j\in A^{t}\right)\geq cK.
\]

Write
\begin{align*}
 & \EE_{id}^{n}\left\{ \left|B\cap D\cap A^{t}\right|^{2}\right\} \leq \EE_{id}^{n}\left\{ \left|D\cap A^{t}\right|^{2}\right\}\\
 & \,\,\,\,=\sum_{j\in D}\PP_{id}^{n}\left(j\in A^{t}\right)+\sum_{i,j\in D:i\neq j}\PP_{id}^{n}\left(i,j\in A^{t}\right)\\
 & \,\,\,\,=K+\left|D\right|\left(\left|D\right|-1\right)\left(\frac{n-2}{n}\right)^{t}.
\end{align*}
Since $K= |D|((n-1)/n)^t$ it follows that
\[
\EE_{id}^{n}\left\{ \left|B\cap D\cap A^{t}\right|^{2}\right\} \leq K+K^{2},
\]
therefore
\[
\VVar_{id}^{n}\left\{ \left|B\cap D\cap A^{t}\right|\right\} \leq K+\left(1-c^{2}\right)K^{2}.
\]

Applying Chebyshev's inequality completes the proof.

\end{proof}

Now, let $R_{j,t}$ and $R_{j,t}^{A^{c}}$ denote the events $\left\{ j\in\triangle_{\alpha}\left(\Pi_{t}\right)\right\} $
and $\left\{ j\in\triangle_{\alpha}\left(\Pi_{t}\right)\right\} \cap\left\{ j\notin A^{t}\right\} $,
respectively. Let $p_{t,n}\triangleq \PP_{id}^{n}\left(j\in A^{t}\right)$
(which is, of course, independent of $j$).
\begin{lem}
\label{lem:delta-A}Let $v\left(\alpha\right)=1-4\Psi\left(\alpha\sqrt{\frac{8}{3}}\right)$.
Let $t_n$ be a sequence of natural numbers such that $t_{n}\leq\frac{3}{4}n\log n$ and suppose $\alpha$ satisfies
$v\left(\alpha\right)>0$. Then, for any $r\in\left(0,1\right)$,
\begin{align*}
 & \limsup_{n\rightarrow\infty}\PP_{id}^{n}\left(\left|\triangle_{\alpha}\left(\Pi_{t_{n}}\right)\cap A^{t_{n}}\right|\leq rv\left(\alpha\right)\varepsilon np_{t_{n},n}\right)\\
 & \qquad\leq\left(1-r\right)^{-2}\left(v^{-2}\left(\alpha\right)-1\right).
\end{align*}
\end{lem}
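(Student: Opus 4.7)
My plan is to apply Lemma \ref{lem:moment} with $B = \triangle_\alpha(\Pi_{t_n})$, $D = D^n$, $t = t_n$, after first extracting a lower bound on the conditional probability $\PP_{id}^n(j \in B \mid j \in A^{t_n})$ from Theorem \ref{thm:delocaliztion}. Since $|\Pi_{t_n}(j) - j| \leq \max_{0\leq m\leq t_n}|\Pi_m(j) - j|$ and the assumption $t_n \leq (3/4)n\log n$ yields
\[
\alpha\sqrt{n\log n} \;\geq\; \alpha\sqrt{4t_n/3} \;=\; \alpha\sqrt{8/3}\cdot\sqrt{t_n/2},
\]
Theorem \ref{thm:delocaliztion} applied with $\alpha$ replaced by $\alpha\sqrt{8/3}$ gives
\[
\liminf_{n\to\infty}\min_{j\in D^n}\PP_{id}^n\bigl(j\in\triangle_\alpha(\Pi_{t_n})\,\big|\,j\in A^{t_n}\bigr) \;\geq\; 1 - 4\Psi(\alpha\sqrt{8/3}) \;=\; v(\alpha).
\]

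Next, I would apply Lemma \ref{lem:moment} after introducing a small slack parameter $\eta \in (0, v(\alpha))$. For all sufficiently large $n$, the infimum above exceeds $v(\alpha) - \eta$, and $|D^n| \geq (1-\eta)\varepsilon n$. Using Lemma \ref{lem:moment} with $c = v(\alpha) - \eta$ and an inflated ratio $r' \in (r, 1)$ chosen so that $r'(v(\alpha)-\eta)(1-\eta) \geq r v(\alpha)$ (possible once $\eta$ is small compared to the gap $r'-r$), I set $K_n = |D^n|p_{t_n,n}$ and observe that $r v(\alpha)\varepsilon n p_{t_n,n} \leq r' c K_n \leq r'\,\EE_{id}^n|B\cap D^n \cap A^{t_n}|$, so
\[
\PP_{id}^n\bigl(|\triangle_\alpha(\Pi_{t_n})\cap A^{t_n}| \leq r v(\alpha)\varepsilon n p_{t_n,n}\bigr) \;\leq\; \frac{K_n + (1-c^2)K_n^2}{(1-r')^2 c^2 K_n^2}.
\]

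Finally, I would pass to the limit. The hypothesis $t_n \leq (3/4)n\log n$ yields $p_{t_n,n} = (1-1/n)^{t_n} \geq n^{-3/4 + o(1)}$, hence $K_n \gtrsim n^{1/4}\to\infty$; the $K_n$ term in the numerator is therefore negligible as $n\to\infty$, and the displayed bound has $\limsup$ at most $(1-r')^{-2}\bigl((v(\alpha)-\eta)^{-2} - 1\bigr)$. Letting $\eta\downarrow 0$ (with $r'\downarrow r$ correspondingly) produces the desired bound $(1-r)^{-2}(v^{-2}(\alpha) - 1)$.

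The main obstacle is purely bookkeeping around the slack parameter: the conditional probability bound from Theorem \ref{thm:delocaliztion} is only available as a $\liminf$, so one cannot feed $c = v(\alpha)$ directly into Lemma \ref{lem:moment}. Introducing $\eta$, tracking how the inflated ratio $r'$ absorbs both the deflated lower bound $v(\alpha) - \eta$ and the approximation $|D^n|/n \to \varepsilon$, and then unwinding these choices in the correct order takes some care but introduces no new idea; the remaining ingredients, including the verification $K_n\to\infty$, are immediate.
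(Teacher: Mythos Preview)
Your approach is essentially the paper's: invoke Theorem~\ref{thm:delocaliztion} to lower-bound the conditional probability, feed this into Lemma~\ref{lem:moment}, and pass to the limit using $K_n\to\infty$; your slack parameters $\eta$ and $r'$ play the same role as the paper's parameter $\delta\in(0,1)$ with $\delta\to 1$.

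One technical point you have glossed over: you apply Theorem~\ref{thm:delocaliztion} to the sequence $t_n$ itself, but that theorem requires $t_n\to\infty$, which is not implied by the hypothesis $t_n\le\frac34 n\log n$ (the other hypothesis $n^2/t_n\to\infty$ is automatic). The paper sidesteps this by applying Theorem~\ref{thm:delocaliztion} at the \emph{fixed} time $\lfloor\frac34 n\log n\rfloor$, defining
\[
\mathcal S(n,\alpha)=\min_{j\in D^n}\PP_{id}^n\!\left(\max_{0\le m\le \frac34 n\log n}|\Pi_m(j)-j|\le\alpha\sqrt{n\log n}\ \Big|\ j\in A^{\lfloor\frac34 n\log n\rfloor}\right),
\]
and then observing that $\mathcal S(n,\alpha)\le\min_{j\in D^n}\PP_{id}^n(R_{j,t_n}\mid j\in A^{t_n})$, since the maximal deviation up to the larger time dominates and the extra conditioning beyond time $t_n$ is independent of $\Pi_{t_n}(j)$. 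Your argument is easily repaired by this same device, or alternatively by noting that along any bounded subsequence of $(t_n)$ the conclusion is trivial.
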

\begin{proof}
With $\mathfrak{\mathcal{S}}\left(n,\alpha\right)$ defined by
\begin{align*}
 & \mathfrak{\mathcal{S}}\left(n,\alpha\right)\\
 & \,\,\triangleq\min_{j\in D^{n}}\PP_{id}^{n}\left(\left.\max_{0\leq m\leq\frac{3}{4}n\log n}\left|\Pi_{m}\left(j\right)-j\right|\leq\alpha\sqrt{n\log n}\right|j\in A^{\left\lfloor \frac{3}{4}n\log n\right\rfloor }\right)\\
 & \,\,\leq\min_{j\in D^{n}}\PP_{id}^{n}\left(\left.R_{j,t_{n}}\right|j\in A^{\left\lfloor \frac{3}{4}n\log n\right\rfloor }\right)\\
 & \,\,=\min_{j\in D^{n}}\PP_{id}^{n}\left(\left.R_{j,t_{n}}\right|j\in A^{t_{n}}\right),
\end{align*}
Lemma \ref{lem:moment} yields
\begin{align*}
 & \PP_{id}^{n}\left(\left|\triangle_{\alpha}\left(\Pi_{t_{n}}\right)\cap A^{t_{n}}\right|\leq r\cdot \EE_{id}^{n}\left\{ \left|\triangle_{\alpha}\left(\Pi_{t_{n}}\right)\cap A^{t_{n}}\right|\right\} \right)\\
 & \qquad\leq\frac{K_{t_{n}}+\left(1-\mathfrak{\mathcal{S}}^{2}\left(n,\alpha\right)\right)K_{t_{n}}^{2}}{\left(1-r\right)^{2}\mathfrak{\mathcal{S}}^{2}\left(n,\alpha\right)K_{t_{n}}^{2}},
\end{align*}
where $K_{t_{n}}\triangleq \EE_{id}^{n}\left\{ \left|D^{n}\cap A^{t_{n}}\right|\right\} .$

A simple calculation shows that $\lim_{n\rightarrow\infty}K_{t_{n}}=\infty$.
Theorem \ref{thm:delocaliztion} (with $t_{n}=\left\lfloor \frac{3}{4}n\log n\right\rfloor $)
implies that
\begin{equation}
\liminf_{n\rightarrow\infty}\mathfrak{\mathcal{S}}\left(n,\alpha\right)\geq1-4\Psi\left(\alpha\sqrt{\frac{8}{3}}\right)=v\left(\alpha\right)>0.\label{eq:5-1}
\end{equation}
Therefore
\begin{equation}
\label{eq:5}
\begin{aligned}
 & \limsup_{n\rightarrow\infty}\PP_{id}^{n}\left(\left|\triangle_{\alpha}\left(\Pi_{t_{n}}\right)\cap A^{t_{n}}\right|\leq r\cdot \EE_{id}^{n}\left\{ \left|\triangle_{\alpha}\left(\Pi_{t_{n}}\right)\cap A^{t_{n}}\right|\right\} \right)\\
 & \qquad\leq\frac{1}{\left(1-r\right)^{2}}\limsup_{n\rightarrow\infty}\frac{\left(1-\mathfrak{\mathcal{S}}^{2}\left(n,\alpha\right)\right)}{\mathfrak{\mathcal{S}}^{2}\left(n,\alpha\right)} \\
 & \qquad\leq\left(1-r\right)^{-2}\left(v^{-2}\left(\alpha\right)-1\right).
\end{aligned}
\end{equation}

Note that by \eqref{eq:5-1}, for any $\delta\in\left(0,1\right)$
and sufficiently large $n$,
\begin{align*}
\EE_{id}^{n}\left\{ \left|\triangle_{\alpha}\left(\Pi_{t_{n}}\right)\cap A^{t_{n}}\right|\right\}  & =\sum_{j\in D^{n}}\PP_{id}^{n}\left(\left.R_{j,t_{n}}\right|j\in A^{t_{n}}\right)\PP_{id}^{n}\left(j\in A^{t_{n}}\right)\\
 & \geq\left|D^{n}\right|p_{t,n}\mathfrak{\mathcal{S}}\left(n,\alpha\right)\geq\delta \varepsilon np_{t_{n},n} v\left(\alpha\right).
\end{align*}

Together with \eqref{eq:5}, this implies
\begin{align*}
 & \limsup_{n\rightarrow\infty}\PP_{id}^{n}\left(\left|\triangle_{\alpha}\left(\Pi_{t_{n}}\right)\cap A^{t_{n}}\right|\leq rv\left(\alpha\right)\varepsilon np_{t_{n},n}\right)\\
 & \qquad\leq\left(1-r/\delta\right)^{-2}\left(v^{-2}\left(\alpha\right)-1\right).
\end{align*}

By letting $\delta\rightarrow1$, the lemma follows.
\end{proof}

\subsection{\label{subsec:deltaP2} The distribution of $\left|\triangle_{\alpha}(\Pi_{t_{n}}) \setminus A^{t_{n}}\right|$
 under $\PP_{id}^{n}$}

As in Subsection \ref{subsec:deltaU}, we shall use Chebyshev's inequality to bound the deviation
of $\left|\triangle_{\alpha}(\Pi_{t_n}) \setminus A^{t_{n}}\right|$ from its expectation with high probability. Here, however,
the computations are much more involved, and the main difficulty is to compute probabilities that depend to joint
distributions of $\Pi_t(i)$ and $\Pi_t(j)$, for general $i\neq j$ in $D^n$, conditioned on $i$ and $j$ not being in $A^t$.
This is treated in the lemma below, in which we denote by $\tau^t_m$ the last time up to time $t$ at which the card numbered $m$ is
chosen for removal, and set $\tau^t_m=\infty$ if it is not chosen up to that time.

\begin{lem}
\label{lem:2ndordProb}Let $i,j\in\left[n\right]$ such that $i\neq j$,
let $\delta >0$, and let $1\leq t_{1}<t_{2}\leq t$ be natural numbers with $t\leq n\log n$.
Then
\begin{align*}
\PP_{id}^{n}\left(\left.\left(\Pi_{t}\left(i\right),\Pi_{t}\left(j\right)\right)\in\overline{i\pm\delta}\times\overline{j\pm\delta}\,\right|\,\tau_{i}^{t}=t_{1},\tau_{j}^{t}=t_{2}\right)\\
\,\,\,\,\leq\frac{1}{n^{2}}\left(\left(1+2\left\lfloor \delta\right\rfloor \right)^{2}+4\delta+g\left(n\right)\right),
\end{align*}
where $g\left(n\right)=\Theta\left(\log^{2}n\right)$ is a function
independent of all the parameters above.\end{lem}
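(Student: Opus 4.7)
The plan is to exploit the two pieces of independent uniform randomness in the problem: the insertion positions $V_{t_1}$ and $V_{t_2}$ at which cards $i$ and $j$ are reinserted at their last selection times. Each is uniform on $[n]$ and independent of all other randomness, so conditioning on $C:=\{\tau_i^t=t_1,\tau_j^t=t_2\}$ does not affect their distributions.

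First, I would establish the marginal statement $\PP_{id}^n(\Pi_t(j)=b\mid\tau_j^t=t_2)=1/n$. Indeed $\Pi_{t_2}(j)=V_{t_2}$ is uniform, and for $s\in(t_2,t]$ the conditional process $(\Pi_s(j))$ follows the Markov chain \eqref{eq:48}; a short detailed-balance check using $p_{i,i+1}^{\Pi(j)}=i(n-i)/(n(n-1))=p_{i+1,i}^{\Pi(j)}$ shows that \eqref{eq:48} is reversible with uniform stationary distribution on $[n]$, so uniformity is preserved. The same reasoning gives that $\Pi_t(i)$ is uniform under $\tau_i^t=t_1$. For the joint event I would then analyze the joint Markov chain of $(\Pi_s(i),\Pi_s(j))$ for $s\in[t_2,t]$, obtained by conditioning the shuffle on neither $i$ nor $j$ being chosen in $(t_2,t]$. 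This chain lives on $\{(x,y)\in[n]^2:x\neq y\}$ and, by transitivity of the random-to-random walk on $S_n$, preserves the uniform distribution on ordered pairs. If $(\Pi_{t_2}(i),\Pi_{t_2}(j))$ were itself uniform on pairs, this invariance would bound the probability by $|\overline{i\pm\delta}|\cdot|\overline{j\pm\delta}|/(n(n-1))$, producing the leading term $(1+2\lfloor\delta\rfloor)^2/n^2$.

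The corrections $4\delta/n^2$ and $g(n)/n^2$ measure the deviation from this ideal scenario. The $4\delta$ term comes from the coupling between $\Pi_{t_2}(i)$ and $V_{t_2}=\Pi_{t_2}(j)$: the shift $\Pi_{t_2}(i)-\Pi_{t_2-1}(i)\in\{-1,0,+1\}$ at step $t_2$ is determined by $V_{t_2}$ and $\Pi_{t_2-1}(j)$, perturbing the joint law of $(\Pi_{t_2}(i),\Pi_{t_2}(j))$ from uniform on pairs by mass of order $\delta/n^2$ localized near the diagonal. The $g(n)=\Theta(\log^2 n)$ correction arises because \eqref{eq:48} is not exactly symmetric and the joint chain carries additional $O(1/n)$ bias from the further conditioning $U_s\neq\Pi_{s-1}(i)$; applying the decomposition of Section \ref{sec:Position} (the splitting $\zeta_m-j=S_m+X_m+Y_m$ into a symmetric walk plus small perturbations) and Chebyshev-type estimates in the spirit of \eqref{eq:35}--\eqref{eq:zetamax} to each coordinate yields joint bias of order $\log n/n$ and joint variance of order $\log^2 n$, contributing $O(\log^2 n/n^2)$ to the total.

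The main obstacle is carrying out this joint analysis without invoking mixing of the coupled chain on pairs, which is essentially the quantity Theorem \ref{thm:mixing} lower-bounds. Concretely, one must quantify the initial joint deviation from uniform on pairs at time $t_2$ explicitly, propagate it through up to $n\log n$ steps using only the deterministic tools from Section \ref{sec:Position}, and verify that the resulting $g(n)$ is independent of the parameters $i,j,t_1,t_2,\delta,t$ as the lemma requires.
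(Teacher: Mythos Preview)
Your high-level diagnosis is correct: the marginals of $\Pi_t(i)$ and $\Pi_t(j)$ are uniform under the conditioning, the joint chain of the two positions (conditioned on neither card being chosen) preserves the uniform law on ordered pairs, and the entire difficulty lies in the fact that $(\Pi_{t_2}(i),\Pi_{t_2}(j))$ is \emph{not} uniform on pairs. You also correctly identify the mechanism of the deviation---the $\pm 1$ shift that the reinsertion of $j$ at time $t_2$ imposes on the position of $i$.

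The gap is that you have not supplied any method for propagating this initial deviation through $t-t_2$ steps. Your proposal to ``apply the decomposition of Section~\ref{sec:Position} to each coordinate'' does not address this: that decomposition controls a single coordinate's displacement from a fixed starting point, whereas here the issue is the \emph{correlation} between the two coordinates. Since the joint chain merely preserves the uniform law on pairs and does not contract towards it (and we are forbidden from using mixing, as you note), a perturbation of size $O(1/n)$ in total variation at time $t_2$ could in principle remain $O(1/n)$ at time $t$, which is far too large---you need $O(\delta/n^2)+O(\log^2 n/n^2)$.

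The paper resolves this with two coupling constructions you do not have. First, it builds auxiliary chains $\Pi^{\pm}_{t'}$ by transposing card $i$ with an adjacent card at time $t_2$ and then shuffling a ``shadow deck'' in parallel so that $\Pi^{-}_{t'}(i)\le \Pi_{t'}(i)\le \Pi^{+}_{t'}(i)$ and $\Pi^{\pm}_{t'}(j)=\Pi_{t'}(j)$ for all $t'\ge t_2$. This monotone sandwich turns the target probability into the uniform-on-pairs contribution plus two correction sums of the form $\frac{1}{n^2}\sum_m \mathcal{P}^{t-t_2}_{\cdot,m}([n]\times\overline{j\pm\delta})$. Second, each such sum is bounded by coupling the two-card chain on a deck of $n$ to a \emph{one}-card chain on a deck of $n-1$ (Lemma~\ref{lem:coupling ijm}), which reduces everything to the single-card estimates you know from Section~\ref{sec:Position}; the $\Theta(\log^2 n)$ arises from choosing the cutoff $r_n$ in that reduction. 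Incidentally, the $4\delta$ is not ``mass near the diagonal'' but simply $2\times 2\delta$ coming from the size of $\overline{j\pm\delta}$ in each of the two correction sums.
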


The proof of Lemma \ref{lem:2ndordProb} is given in Section \ref{sec:pflem}. Now,
let us see how it is used to prove the following.

\begin{lem}
\label{lem:delta-A^c}Let $t_n$ be a sequence of integers such that $0 \leq t_{n}\leq\left\lfloor \frac{3}{4}n\log n\right\rfloor$
 and let $k,\,\alpha>0$ be real numbers. Then,
\begin{align*}
 & \limsup_{n\rightarrow\infty}\PP_{id}^{n}\left(\left|\left|\triangle_{\alpha}\left(\Pi_{t_{n}}\right)\setminus A^{t_{n}}\right|-2\varepsilon\alpha\left(1-p_{t_{n},n}\right)\sqrt{n\log n}\right|\ldots\right.\\
 & \quad\quad\left.\ldots\geq k \sqrt{6\varepsilon\alpha} (n\log n)^{1/4}   \right)\leq\frac{1}{k^{2}}.
\end{align*}
\end{lem}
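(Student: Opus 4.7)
The plan is to apply Chebyshev's inequality, in parallel with the proof of Lemma \ref{lem:delta-U}, by computing the first moment and bounding the variance of $X_n \triangleq \left|\triangle_{\alpha}(\Pi_{t_n}) \setminus A^{t_n}\right|$. The new ingredient is Lemma \ref{lem:2ndordProb}, which controls the joint tail probabilities appearing in the second moment.

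For the first moment I would establish that, conditionally on $j\notin A^{t_n}$, the distribution of $\Pi_{t_n}(j)$ is uniform on $[n]$. The key observations are: (i) conditionally on $\tau_j^{t_n}=t_1<\infty$, card $j$ is reinserted at a uniformly chosen position at time $t_1$, so $\Pi_{t_1}(j)$ is uniform on $[n]$; and (ii) for $m\in (t_1,t_n]$ card $j$ is not chosen, so $\Pi_m(j)$ evolves by the transitions $p_{i,i+k}^{\Pi(j)}$ from \eqref{eq:48}, for which a direct calculation shows $\sum_i (1/n) p_{i,k}^{\Pi(j)}=1/n$; hence the uniform law on $[n]$ is stationary. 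Averaging over $t_1$ transfers the conclusion to the event $\{j\notin A^{t_n}\}$. For $n$ large, every $j\in D^n$ is far enough from the boundary that $\{k\in [n]:|k-j|\le\alpha\sqrt{n\log n}\}$ has exactly $1+2\lfloor\alpha\sqrt{n\log n}\rfloor$ elements, so
\[
\EE_{id}^n[X_n]=|D^n|(1-p_{t_n,n})\frac{1+2\lfloor\alpha\sqrt{n\log n}\rfloor}{n}=2\varepsilon\alpha(1-p_{t_n,n})\sqrt{n\log n}+O(1).
\]

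For the second moment, writing $\EE[X_n^2]=\EE[X_n]+\sum_{i\neq j\in D^n}\PP_{id}^n(R_{i,t_n}^{A^c}\cap R_{j,t_n}^{A^c})$, I would condition on $(\tau_i^{t_n},\tau_j^{t_n})$ and invoke Lemma \ref{lem:2ndordProb} with $\delta=\alpha\sqrt{n\log n}$ to obtain
\[
\PP_{id}^n(R_{i,t_n}^{A^c}\cap R_{j,t_n}^{A^c})\le \PP_{id}^n(i,j\notin A^{t_n})\cdot\frac{(1+2\lfloor\alpha\sqrt{n\log n}\rfloor)^2+4\alpha\sqrt{n\log n}+g(n)}{n^2}.
\]
Using $\PP_{id}^n(i,j\notin A^{t_n})=1-2p_{t_n,n}+((n-2)/n)^{t_n}$ together with the elementary inequality $((n-2)/n)^{t_n}\le ((n-1)/n)^{2t_n}$, the leading $(1+2\lfloor\alpha\sqrt{n\log n}\rfloor)^2$ piece almost cancels $(\EE[X_n])^2=|D^n|^2(1-p_{t_n,n})^2(1+2\lfloor\alpha\sqrt{n\log n}\rfloor)^2/n^2$, leaving only an $o(\sqrt{n\log n})$ remainder. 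The remaining $4\alpha\sqrt{n\log n}+g(n)$ piece contributes at most $|D^n|^2(4\alpha\sqrt{n\log n}+g(n))/n^2\le 4\varepsilon^2\alpha\sqrt{n\log n}+o(\sqrt{n\log n})$. Combined with $\EE[X_n]\le 2\varepsilon\alpha\sqrt{n\log n}(1+o(1))$ and $\varepsilon<1$, this yields $\VVar_{id}^n[X_n]\le 6\varepsilon\alpha\sqrt{n\log n}(1+o(1))$.

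Chebyshev's inequality then delivers the claim, with the $O(1)$ discrepancy between $\EE[X_n]$ and the centering $2\varepsilon\alpha(1-p_{t_n,n})\sqrt{n\log n}$ absorbed into the $o(1)$ factor. The main technical obstacle is verifying the variance cancellation: without it, the variance would be of order $n\log n$ rather than $\sqrt{n\log n}$, and Chebyshev would only yield deviations on scale $(n\log n)^{1/2}$, far too crude for the intended application in Section \ref{sec:pfmix}. It is exactly the sharpness of Lemma \ref{lem:2ndordProb}, namely that the correction term $4\delta+g(n)$ is of strictly lower order than the main $(1+2\lfloor\delta\rfloor)^2$ term, that makes the cancellation go through.
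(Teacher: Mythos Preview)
Your proposal is correct and follows essentially the same route as the paper's own proof: uniformity of $\Pi_{t_n}(j)$ conditioned on $\tau_j^{t_n}=t_1$ (via symmetry of the transition matrix) gives the exact first moment, Lemma~\ref{lem:2ndordProb} together with $\PP_{id}^n(i,j\notin A^{t_n})\le(1-p_{t_n,n})^2$ bounds the off-diagonal second-moment terms, and the resulting variance bound $\limsup_n \VVar_{id}^n[X_n]/\sqrt{n\log n}\le 4\varepsilon^2\alpha+2\varepsilon\alpha\le 6\varepsilon\alpha$ feeds into Chebyshev. Your write-up is in fact more explicit than the paper's about the conditioning on $(\tau_i^{t_n},\tau_j^{t_n})$ and the inequality $((n-2)/n)^{t_n}\le p_{t_n,n}^2$, but the argument is the same.
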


\begin{proof}
For some $t_1 \leq t$, consider the Markov chain  $\Pi_{t'}\left(j\right)$, $t'=t_1,\ldots,t$, conditioned on $\tau^t_j = t_1$. By definition,
its initial distribution is the uniform measure on $[n]$. One can easily check that the transition matrix of this chain is symmetric. Therefore its
stationary measure, and thus its distribution at time $t$, is also the uniform measure.

Thus, assuming $n(1-\varepsilon)/2\geq\alpha\sqrt{n\log n}$,
\begin{align*}
\EE_{id}^{n}\left\{ \left|\triangle_{\alpha}\left(\Pi_{t}\right)\setminus A^{t}\right|\right\}  & =\sum_{j\in D^{n}}\PP_{id}^{n}\left(\left.R_{j,t}^{A^{c}}\right|j\notin A^{t}\right)\PP_{id}^{n}\left(j\notin A^{t}\right)\\
 & =\left|D^{n}\right|\frac{1+2\left\lfloor \alpha\sqrt{n\log n}\right\rfloor }{n}\left(1-p_{t,n}\right).
\end{align*}

For the second moment write
\begin{equation}
\nonumber
\EE_{id}^{n}\left\{ \left|\triangle_{\alpha}\left(\Pi_{t}\right)\setminus A^{t}\right|^{2}\right\}  =\EE_{id}^{n}\left\{ \left|\triangle_{\alpha}\left(\Pi_{t}\right)\setminus A^{t}\right|\right\} +\sum_{i,j\in D:i\neq j}\PP_{id}^{n}\left(R_{i,t}^{A^{c}}\cap R_{j,t}^{A^{c}}\right).
\end{equation}

From Lemma \ref{lem:2ndordProb},
\begin{align*}
& \sum_{i,j\in D:i\neq j}\PP_{id}^{n}\left(R_{i,t}^{A^{c}}\cap R_{j,t}^{A^{c}}\right) \leq \frac{\left|D^{n}\right|^2}{n^{2}} \left(1-p_{t,n}\right)^2 \times \\
&\qquad \left\{\left(1+2\left\lfloor \alpha\sqrt{n\log n}\right\rfloor \right)^{2}+4\alpha\sqrt{n\log n}+\Theta\left(\log^{2}n\right)\right\}.
\end{align*}

Therefore,
\begin{equation}
\nonumber
\limsup_{n\rightarrow\infty} \frac{\VVar_{id}^{n}\left\{ \left|\triangle_{\alpha}\left(\Pi_{t_{n}}\right)\setminus A^{t_n}\right|\right\}}{\sqrt{n\log n}}
\leq 4\varepsilon^2 \alpha + 2\varepsilon \alpha \leq 6\varepsilon \alpha.
\end{equation}

By Chebyshev's inequality, the lemma follows.
\end{proof}

\begin{remark}
Assume $t_n$ is of the form in Theorem \ref{thm:mixing} with $c_n$ satisfying $\limsup c_n/ \log n < 1/4$.
From Lemma \ref{lem:delta-A^c},
$$\left|\triangle_{\alpha}(\Pi_{t_n})\setminus A^{t_n}\right|/\left(2\varepsilon\alpha\sqrt{n\log n}\right) \Longrightarrow 1.$$

By a simple computation, taking into account our restriction on $c_n$, it is seen that $\EE_{id}^{n}\left|A^{t_n}\right|=o(\sqrt{n})$. Therefore
\begin{equation}
\label{eq:convprob}
\left|\triangle_{\alpha}\right|/\left(2\varepsilon\alpha\sqrt{n\log n}\right) \Longrightarrow 1,
\end{equation}
under $\PP_{id}^n(\Pi_{t_n} \in \cdot)$. From Lemma \ref{lem:delta-U}, the convergence in \eqref{eq:convprob}, clearly,  holds under the stationary measure $\UU^n$ as well.
\end{remark}

\section{\label{sec:pfmix}Proof of Theorem \ref{thm:mixing}}

In order to prove the TV lower bound we consider the deviation of
$\left|\triangle_{\alpha}\right|$ from $2\varepsilon\alpha\sqrt{n\log n}$.
Assume $t_{n}$ is as in the theorem. Let $k>0$ and $\alpha$ be real numbers such that $v\left(\alpha\right)>0$
(where $v\left(\alpha\right)$ was defined in Lemma \ref{lem:delta-A}).
The parameters $k$ and $\alpha$ will be fixed until \eqref{eq:k_alpha_bound}, where we derive a lower bound on the TV distance which depends
on them. Then, maximizing over the two parameters, we shall obtain the required bound on TV distance.

Suppose that for some $n$
\begin{equation}
\label{eq:mixpf1}
\left|\left|\triangle_{\alpha}\left(\Pi_{t}\right)\setminus A^{t_{n}}\right|-2\varepsilon\alpha\left(1-p_{t_{n},n}\right)\sqrt{n\log n}\right|<k\sqrt{6 \varepsilon \alpha } (n\log n)^{1/4},
\end{equation}
and
\begin{equation}
\label{eq:mixpf2}
\left|\triangle_{\alpha}\left(\Pi_{t_{n}}\right)\cap A^{t_{n}}\right|>\frac{1}{2}v\left(\alpha\right)\varepsilon np_{t_{n},n}.
\end{equation}
Then, if $n$ is sufficiently large,
\begin{equation}
  \label{eq:mixpf3}
\begin{aligned}
&\left|\triangle_{\alpha}\left(\Pi_{t_{n}}\right)\right|-2\varepsilon\alpha\sqrt{n\log n} \\
& \quad \geq\varepsilon np_{t_{n},n}\left(\frac{1}{2}v\left(\alpha\right)-2\alpha\sqrt{n\log n}/n\right) -k\sqrt{6 \varepsilon \alpha} (n\log n)^{1/4}\\
 & \quad\geq k \sqrt{2 \varepsilon \alpha} (n\log n)^{1/4},
\end{aligned}
\end{equation}
where the last inequality follows from the following calculation: writing
\[
\log\frac{np_{t_{n},n}}{(n\log n)^{1/4}}=\frac{3}{4}\log n-\frac{1}{4}\log\log n+\log p_{t_{n},n},
\]
substituting $p_{t_{n},n}=(1-1/n)^{t_n}$ and $t_{n}=\frac{3}{4}n\log n-\frac{1}{4}n\log\log n-c_{n}n$, and using the fact that $\log\left(1+x\right)=x+O\left(x^{2}\right)$
as $x\rightarrow0$, we arrive at
\begin{equation}
\label{eq:c_n}
\log\frac{np_{t_{n},n}}{(n\log n)^{1/4}}=c_{n}+o\left(1\right)\rightarrow\infty.
\end{equation}

Now, since for large $n$ \eqref{eq:mixpf1} and \eqref{eq:mixpf2} imply \eqref{eq:mixpf3},  by a union bound, Lemma \ref{lem:delta-A} and Lemma \ref{lem:delta-A^c} imply
\begin{align*}
 & \liminf_{n\rightarrow\infty}\PP_{id}^{n}\left(\left|\triangle_{\alpha}\left(\Pi_{t_{n}}\right)\right|-2\varepsilon\alpha\sqrt{n\log n}\geq
k \sqrt{2 \varepsilon \alpha} (n\log n)^{1/4} \right)\\
 & \quad\geq1-\frac{1}{k^{2}}-\left(1-\frac{1}{2}\right)^{-2}\left(v^{-2}\left(\alpha\right)-1\right) \triangleq \phi(k,\alpha) .
\end{align*}

In addition, from Lemma \ref{lem:delta-U},
\[
\limsup_{n\rightarrow\infty}\UU^{n}\left(\left|\triangle_{\alpha}\left(\sigma\right)\right|-2\varepsilon\alpha\sqrt{n\log n}\geq
k \sqrt{2 \varepsilon \alpha} (n\log n)^{1/4} \right)\leq\frac{1}{k^{2}} .
\]

Thus,
\begin{equation}
\label{eq:k_alpha_bound}
\liminf_{n\rightarrow\infty}\left\Vert \PP_{id}^{n}\left(\Pi_{t_n}\in\cdot\right)-\UU^{n}\right\Vert _{TV} \geq \phi(k,\alpha) - \frac{1}{k^2}.
\end{equation}
Since $k$ and $\alpha$ were arbitrary, and since as $k,\alpha \rightarrow \infty$, $\phi(k,\alpha) \rightarrow 1$ and $\frac{1}{k^2} \rightarrow 0$,
\[
\lim_{n\rightarrow\infty}\left\Vert \PP_{id}^{n}\left(\Pi_{t_n}\in\cdot\right)-\UU^{n}\right\Vert _{TV} = 1.
\]
\qed

\begin{remark}
In Section \ref{sec:delta}, we have seen that the standard deviation of both $\left|\triangle_{\alpha}\right|$
under $\UU^{n}$ and $\left|\triangle_{\alpha}(\Pi_{t_{n}})\setminus A^{t_{n}}\right|$ under $\PP_{id}^{n}$ is of order $\Theta((n\log n)^\frac{1}{4})$.
Since $\EE_{id}^{n}\left\{\left|\triangle_{\alpha}(\Pi_{t_{n}}) \cap A^{t_{n}}\right|\right\}=\Theta(np_{t_{n},n})$, by \eqref{eq:c_n}, it is of higher order than $\Theta((n\log n)^\frac{1}{4})$, if and only if $t_n$  is of the form in Theorem \ref{thm:mixing}.
In particular, this shows why the term $-\frac{1}{4}n \log\log n$ is essential to us in the choice of $t_n$.
\end{remark}
\section{\label{sec:pflem}Proof of Lemma \ref{lem:2ndordProb}}

In this section we prove Lemma \ref{lem:2ndordProb} and additional results needed for the proof.

\begin{proof}
(Lemma \ref{lem:2ndordProb})
For $m_{1}\in\left[n-1\right]$, $m_{2}\in\left[n\right]$ and $0\leq t'\in\mathbb{Z}$,
let $\sigma\in S_{n}$ be some permutation such that $\sigma\left(j\right)=m_{2}$
and
\[
\sigma\left(i\right)=\begin{cases}
m_{1}+1 & \mbox{if }m_{2}\leq m_{1},\\
m_{1} & \mbox{if }m_{2}>m_{1},
\end{cases}
\]
and let $\mathcal{P}_{m_{1},m_{2}}^{t'}=\mathcal{P}_{m_{1},m_{2}}^{n,t'}$
be the probability measure on $\left[n\right]\times\left[n\right]$
defined by
\[
\mathcal{P}_{m_{1},m_{2}}^{n,t'}\left(\cdot\right)=\PP_{\sigma}^{n}\left(\left.\left(\Pi_{t'}\left(i\right),\Pi_{t'}\left(j\right)\right)\in\cdot\,\right|\, i,j\in A^{t'}\right).
\]
(Which, obviously, does not depend on the values $\sigma\left(k\right)$
for $k\notin\left\{ i,j\right\} $.)

That is, starting with a deck whose ordering is obtained by inserting the card numbered
$j$ in position $m_{2}$, in a deck composed of the $n-1$ cards with numbers in $\left[n\right]\setminus\left\{ j\right\}$
in which the position of card $i$ is $m_{i}$, $\mathcal{P}_{m_{1},m_{2}}^{t'}$ is the joint probability law
of the positions of the cards numbered $i$ and $j$ after performing
$t'$ random-to-random insertion shuffles, conditioned on not choosing
either of the cards $i$ and $j$.

Now, let $t$, $t_1$ and $t_2$ be natural numbers as in the statement of the lemma, which will be fixed throughout the proof.
Define the events
\[
Q_{m}^{+}=\left\{ \Pi_{t_{2}-1}\left(i\right)=m,\Pi_{t_{2}-1}\left(j\right)>m\right\} ,
\]
\[
Q_{m}^{-}=\left\{ \Pi_{t_{2}-1}\left(i\right)=m,\Pi_{t_{2}-1}\left(j\right)<m\right\} ,
\]
and define $q_{m}^+$ and $q_{m}^-$ by
\[
q_{m}^{\pm}=\PP_{id}^{n}\left(\left.Q_{m}^{\pm}\,\right|\,\tau_{i}^{t}=t_{1},\tau_{j}^{t}=t_{2}\right).
\]
Define the probability measure $\mu$ on $\left[n\right]\times\left[n\right]$ by
\begin{align}
\mu\left(\cdot\right)\triangleq & \,\PP_{id}^{n}\left(\left.\left(\Pi_{t}\left(i\right),\Pi_{t}\left(j\right)\right)\in\cdot\,\right|\,\tau_{i}^{t}=t_{1},\tau_{j}^{t}=t_{2}\right)\label{eq:P_ij}\\
= & \, \frac{1}{n}\sum_{m_{2}=1}^{n}\left\{ \sum_{m_{1}=1}^{n-1}q_{m_{1}}^{+}\mathcal{P}_{m_{1},m_{2}}^{t-t_{2}}\left(\cdot\right)+\sum_{m_{1}=2}^{n}q_{m_{1}}^{-}\mathcal{P}_{m_{1}-1,m_{2}}^{t-t_{2}}\left(\cdot\right)\right\} .\nonumber
\end{align}

Considering the Markov chain $\Pi_{t'}(i)$, $t'=t_1,\ldots,t_2-1$, conditioned on $\tau^t_i=t_1$ and $\tau^t_j=t_2$,
by an argument similar to that given in the beginning of the proof of Lemma \ref{lem:delta-A^c}, $\Pi_{t_2-1}(i)$ is
uniformly distributed on $[n]$. Thus, for any $m\in\left[n\right]$,
\begin{equation}
q_{m}^{+}+q_{m}^{-}=\frac{1}{n}.\label{eq:q sum}
\end{equation}

Similarly, for any $s \in \mathbb{N}$, the transition matrix of the chain $\left(\Pi_{t'}\left(i\right),\Pi_{t'}\left(j\right)\right)$,
$t'=0,\ldots, s$, conditioned on $i, j \in A^{s}$, is symmetric, and therefore the uniform measure on $\left\{ \left(m_{1},m_{2}\right)\in\left[n\right]^{2}:\, m_{1}\neq m_{2}\right\} $,
which we denote by $\UU_{\left(2\right)}^{n}$, is a stationary measure of the chain (the chain is reducible, thus the
stationary measure is not unique). It therefore follows
that for any $0\leq t'\in\mathbb{Z}$,
\begin{equation}
\frac{1}{n\left(n-1\right)}\sum_{m_{1}=1}^{n-1}\sum_{m_{2}=1}^{n}\mathcal{P}_{m_{1},m_{2}}^{t'}\left(\cdot\right)=\UU_{\left(2\right)}^{n}\left(\cdot\right).\label{eq:uniform_ij}
\end{equation}

Our next step is to define two additional Markov chains $\Pi_{t'}^{-}$
and $\Pi_{t'}^{+}$, $t'=t_{2},t_{2}+1,\ldots,t$, with state space
$S_{n}$, such that on $\left\{ \tau_{i}^{t}=t_{1},\tau_{j}^{t}=t_{2}\right\} $,
\begin{equation}
\Pi_{t'}^{-}\left(i\right)\leq\Pi_{t'}\left(i\right)\leq\Pi_{t'}^{+}\left(i\right)\mbox{\,\,\,\ and\,\,\,\ }\Pi_{t'}^{-}\left(j\right)=\Pi_{t'}\left(j\right)=\Pi_{t'}^{+}\left(j\right),\label{eq:Pi relations}
\end{equation}
for any $t'=t_{2},t_{2}+1,\ldots,t$. Once we have done so, defining $\mu^+$, $\mu^-$ by
\[
\mu^{\pm}\left(\cdot\right)\triangleq \PP_{id}^{n}\left(\left.\left(\Pi_{t}^{\pm}\left(i\right),\Pi_{t}^{\pm}\left(j\right)\right)\in\cdot\,\right|\,\tau_{i}^{t}=t_{1},\tau_{j}^{t}=t_{2}\right),
\]
it will follow that
\begin{equation}
\label{eq:bound_mu}
\begin{aligned}
\mu\left(\overline{i\pm\delta}\times\overline{j\pm\delta}\right)\leq & \,\,\mu\left(\left[n\right]\times\overline{j\pm\delta}\right)\\
 & -\mu^{-}\left(\left(\left(i+\delta,n\right]\cap\left[n\right]\right)\times\overline{j\pm\delta}\right)\\
 & -\mu^{+}\left(\left(\left[1,i-\delta\right)\cap\left[n\right]\right)\times\overline{j\pm\delta}\right).
\end{aligned}
\end{equation}

For each $m_{1}\in\left[n\right]\setminus\left\{ 1,n\right\} $ define
the events $\widehat{Q}_{m_{1}}^+$, $\widehat{Q}_{m_{1}}^-$, $\widehat{Q}^+$ and $\widehat{Q}^-$ by
\begin{equation}
\label{eq:q_m1}
\begin{aligned}
\widehat{Q}_{m_{1}}^{\pm}= & \,Q_{m_{1}}^{\pm}{\textstyle \bigcap}\left\{ \Pi_{t_{2}}\left(j\right)\neq m_{1}\right\} {\textstyle \bigcap}\left\{ \tau_{i}^{t}=t_{1},\tau_{j}^{t}=t_{2}\right\} ,\\
\widehat{Q}^{\pm}= & \underset{m_{1}=2,\ldots,n-1 }{{\textstyle \bigcup}}Q_{m_{1}}^{\pm}.
\end{aligned}
\end{equation}
Let us define $\Pi_{t'}^{+}$
and $\Pi_{t'}^{-}$ by setting, for $t'=t_{2},t_{2}+1,\ldots,t$,
\begin{align*}
\Pi_{t'}^{+}= & \Pi_{t'}^{0}\,\,\,\mbox{on}\,\,\,\widehat{Q}^{-}, & \Pi_{t'}^{+}= & \Pi_{t'}\,\,\,\mbox{on}\,\,\,\left(\widehat{Q}^{-}\right)^{c},\\
\Pi_{t'}^{-}= & \Pi_{t'}^{0}\,\,\,\mbox{on}\,\,\,\widehat{Q}^{+}, & \Pi_{t'}^{-}= & \Pi_{t'}\,\,\,\mbox{on}\,\,\,\left(\widehat{Q}^{+}\right)^{c},
\end{align*}
where $\Pi_{t'}^{0}$ is an additional Markov chain defined on
$\widehat{Q}^{+}\cup\widehat{Q}^{-}$ as described below.

The random walk $\Pi_{t'}$ on $S_{n}$ corresponds to the ordering
of a deck of $n$ cards as it is being shuffled by random-to-random
insertion shuffles. Let us call this deck of cards deck A, and for
each time $t'$ let us denote by $c_{t'}$ and $d_{t'}$ the number
of the card removed from the deck at that time and the position into which
it is reinserted, respectively. (To avoid any confusion --
we refer to the ordering of the deck after the removal the card numbered
$c_{t'}$ and its reinsertion to position $d_{t'}$ as the state of
the deck at time $t'$, and not $t'+1$.)

In order to define $\Pi_{t'}^{0}$, we describe a shuffling process
on a deck of $n$ cards, which we shall refer to as deck B, on the
set of times $t'=t_{2},t_{2}+1,\ldots,t$, and set $\Pi_{t'}^{0}$
to be the permutation corresponding to the ordering of the deck at
time $t'$ (i.e., $\Pi_{t'}^{0}\left(k\right)$ is the position of
the card numbered $k$).

We begin by defining the state of deck B at time $t'=t_{2}$
on $\widehat{Q}^{+}$ (respectively, $\widehat{Q}^{-}$) as the deck
obtained by taking a deck of $n$ cards ordered as deck A is ordered at
the same time and transposing the card numbered $i$ with the card which
has position lower (receptively, higher) by $1$ from the card numbered
$i$.

For a given state of decks A and B, let us say that two cards with numbers
in $\left[n\right]\setminus\left\{ i,j\right\} $, one in each of
the decks, `match' each other, if after removing the cards
numbered $i$ and $j$ from both decks they have the same position.

At each of the times $t'=t_{2}+1,\ldots,t$, suppose
deck B is shuffled based upon how deck A is as follows: when the card
numbered $c_{t'}$ is removed from deck A, we also remove the matching
card from deck B; then, we reinsert both cards to their decks in the
same position, $d_{t'}$. This defines the state of deck B, and thus $\Pi^0_{t'}$, for times
$t'=t_{2}+1,\ldots,t$.

A concrete example of a simultaneous shuffle of both decks with $n=8$,
$i=5$ and $j=7$ is given in Figure \ref{fig:deckAB}. Cards $i$
and $j$ are colored gray. Card $1$ is chosen for
removal in deck A, and so the matching card, $4$, is the one removed
from deck B. Then, they are reinserted in the same position.
\begin{figure}[h!]
\hspace{-1.5em}\includegraphics[width=1\textwidth]{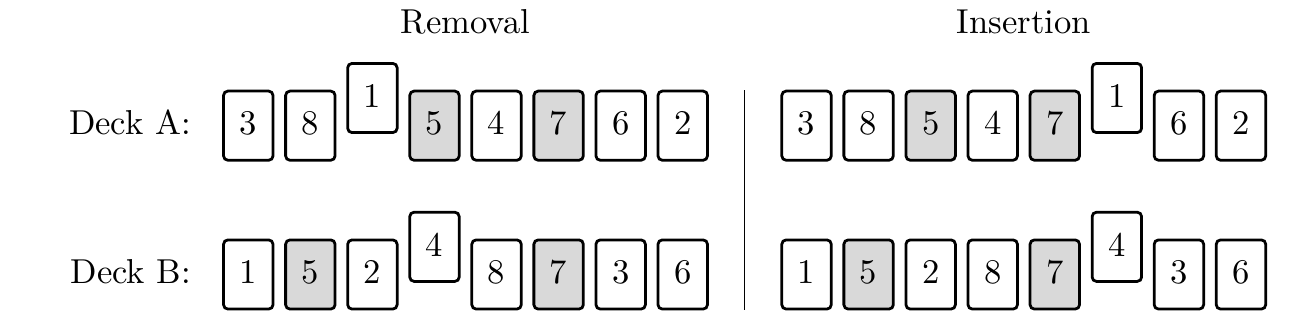}\\
\caption{\label{fig:deckAB}A shuffle of deck A and deck B.}
\end{figure}

Directly from definition, \eqref{eq:Pi relations} holds for $t'=t_{2}$.
It is also easy to verify that every single shuffle of decks A and
B as described above preserves the relations in \eqref{eq:Pi relations}, which implies that,
indeed, (\ref{eq:Pi relations}) holds for any $t'=t_{2},t_{2}+1,\ldots,t$.

Note that, by definition, 
\begin{align*}
\mu^{+}\left(\cdot\right) & = \sum_{m=1}^{n-1} q^+_m \PP_{id}^{n}\left(\left.\left(\Pi_{t}^{+}\left(i\right),\Pi_{t}^{+}\left(j\right)\right)\in\cdot\,\right|\, Q^+_m,\tau_{i}^{t}=t_{1},\tau_{j}^{t}=t_{2}\right)\\
& + \sum_{m=2}^{n} q^-_m \PP_{id}^{n}\left(\left.\left(\Pi_{t}^{+}\left(i\right),\Pi_{t}^{+}\left(j\right)\right)\in\cdot\,\right|\, Q^-_m,\tau_{i}^{t}=t_{1},\tau_{j}^{t}=t_{2}\right)\\
& = \frac{1}{n} \sum_{m_1=1}^{n-1} \sum_{m_2=1}^{n} q^+_{m_1} \mathcal{P}_{m_1,m_2}^{t-t_{2}}\left(\cdot\right)
+\frac{1}{n} \sum_{m_1=2}^{n} q^-_{m_1} \mathcal{P}_{m_1-1,m_1}^{t-t_{2}}\left(\cdot\right)\\
& + \frac{1}{n} \sum_{m_1=2}^{n} \sum_{m_2\in [n]\setminus \{m_1\}} q^-_{m_1} \mathcal{P}_{m_1,m_2}^{t-t_{2}}\left(\cdot\right).
\end{align*}
From this, together with (\ref{eq:q sum}) and (\ref{eq:uniform_ij}),
we obtain
\begin{equation}
\label{eq:mu+}
\begin{aligned}
\mu^{+}\left(\cdot\right) & =\frac{n-1}{n}\UU_{\left(2\right)}^{n}\left(\cdot\right)+\frac{1}{n^{2}}\sum_{m=1}^{n}\mathcal{P}_{n-1,m}^{t-t_{2}}\left(\cdot\right)\\
 & \,\,+\frac{1}{n}\sum_{m=2}^{n-1}\left\{ q_{m}^{-}\mathcal{P}_{m-1,m}^{t-t_{2}}\left(\cdot\right)-q_{m}^{-}\mathcal{P}_{m,m}^{t-t_{2}}\left(\cdot\right)\right\}.
\end{aligned}
\end{equation}
Similarly,
\begin{equation}
\label{eq:mu-}
\begin{aligned}
\mu^{-}\left(\cdot\right) & =\frac{n-1}{n}\UU_{\left(2\right)}^{n}\left(\cdot\right)+\frac{1}{n^{2}}\sum_{m=1}^{n}\mathcal{P}_{1,m}^{t-t_{2}}\left(\cdot\right)\\
 & \,\,+\frac{1}{n}\sum_{m=2}^{n-1}\left\{ q_{m}^{+}\mathcal{P}_{m,m}^{t-t_{2}}\left(\cdot\right)-q_{m}^{+}\mathcal{P}_{m-1,m}^{t-t_{2}}\left(\cdot\right)\right\} .
\end{aligned}
\end{equation}

According to (\ref{eq:Pi relations}), $\mu\left(\left[n\right]\times\overline{j\pm\delta}\right)=\mu^{+}\left(\left[n\right]\times\overline{j\pm\delta}\right)$.
Hence, by substitution of (\ref{eq:mu+}) and (\ref{eq:mu-}) in (\ref{eq:bound_mu}),
and using (\ref{eq:q sum}), it can be easily shown that
\begin{equation}
\label{eq:mu_fin_approx}
\begin{aligned}
& \mu\left(\overline{i\pm\delta}\times\overline{j\pm\delta}\right)\leq \frac{n-1}{n}\UU_{\left(2\right)}^{n}\left(\overline{i\pm\delta}\times\overline{j\pm\delta}\right)+\\ & \qquad \frac{1}{n^{2}}\sum_{m=1}^{n}\mathcal{P}_{n-1,m}^{t-t_{2}}\left(\left[n\right]\times\overline{j\pm\delta}\right) +\frac{1}{n^{2}}\sum_{m=2}^{n-1}\mathcal{P}_{m-1,m}^{t-t_{2}}\left(\left[n\right]\times\overline{j\pm\delta}\right).
\end{aligned}
\end{equation}

The first summand is bounded by
\[
\frac{n-1}{n}\UU_{\left(2\right)}^{n}\left(\overline{i\pm\delta}\times\overline{j\pm\delta}\right)\leq\frac{\left(1+2\left\lfloor \delta\right\rfloor \right)^{2}}{n^{2}}.
\]

Note that, for fixed $m_{2}$, $\mathcal{P}_{m_{1},m_{2}}^{t_{2}-t'}\left(\left[n\right]\times\overline{j\pm\delta}\right)$
is identical for all $m_{1}$ such that $m_{1}<m_{2}$, and for all
$m_{1}$ such that $m_{1}\geq m_{2}$. Thus,
\begin{equation}
\nonumber
\sum_{m=2}^{n-1}\mathcal{P}_{m-1,m}^{t-t_{2}}\left(\left[n\right]\times\overline{j\pm\delta}\right)=\sum_{m=2}^{n-1}\mathcal{P}_{1,m}^{t-t_{2}}\left(\left[n\right]\times\overline{j\pm\delta}\right).\label{eq:s1}
\end{equation}
Corollary \ref{cor:sum bound} below provides an upper bound for this
sum. Bounding the additional sum in (\ref{eq:mu_fin_approx}) by the
same bound can be done similarly, which completes the proof.
\end{proof}
Corollary \ref{cor:sum bound}, used in the previous proof, will follow
from the following.
\begin{lem}
\label{lem:coupling ijm}For any real number $\delta\geq0$,
integers $r,\,t\geq0$, $i,j\in\left[n\right]$, and $m\in\left[n-1\right]$,
\begin{align*}
\mathcal{P}_{1,m+1}^{n,t}\left(\left[n\right]\times\overline{j\pm\delta}\right) & \leq \PP_{id}^{n-1}\left(\left.\Pi_{t}\left(1\right)>r\,\right|\,1\in A^{t}\right)+\\
 & \,\, \PP_{id}^{n-1}\left(\left.\Pi_{t}\left(m\right)\in\overline{j\pm\left(\delta+r\right)}\,\right|\, m\in A^{t}\right).
\end{align*}

\end{lem}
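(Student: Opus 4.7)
The plan is to reduce the $n$-deck probability on the left to $(n-1)$-deck probabilities on the right via two separate coupling arguments. First I would split the event on the left based on whether the position of card $i$ after $t$ steps exceeds $r+1$:
\[
\mathcal{P}_{1,m+1}^{n,t}\bigl([n]\times\overline{j\pm\delta}\bigr)
\le \mathcal{P}_{1,m+1}^{n,t}\bigl((r+1,\infty)\times[n]\bigr)
+ \mathcal{P}_{1,m+1}^{n,t}\bigl([1,r+1]\times\overline{j\pm\delta}\bigr),
\]
and bound each term by one of the RHS terms of the lemma.

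For the first term I would build a monotone coupling between the $n$-deck chain conditioned on $i,j\in A^{t}$ (tracking $\Pi_{t}(i)$) and the $(n-1)$-deck chain conditioned on $1\in A^{t}$ (tracking $\Pi_{t}(1)$). Both processes start at position $1$, and a direct computation of the one-step transition probabilities (along the same lines as \eqref{eq:48} applied in each deck) shows that at any common position $k$ the $(n-1)$-deck chain jumps upward with at-least-as-large a probability, and downward with at-most-as-large a probability, as the $n$-deck chain, essentially because the extra conditioned card $j$ ``wastes'' some of the upward drift in the larger deck. Hence, in the spirit of Proposition~\ref{pro:modification}, one can arrange a coupling with $\Pi_{t}(i)\le\Pi_{t}(1)$ almost surely, which yields the bound $\PP_{id}^{n-1}(\Pi_t(1)>r\mid 1\in A^{t})$.

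For the second term I would construct a different coupling, now between the $n$-deck (conditioned on $i,j\in A^{t}$) and the $(n-1)$-deck (conditioned on $m\in A^{t}$). The coupling pairs the $n-2$ non-conditioned cards of each deck via a bijection between $[n]\setminus\{i,j\}$ and $[n-1]\setminus\{m\}$, identifies card $j$ with card $m$, and couples the two insertion-position distributions (on $\{1,\dots,n\}$ and $\{1,\dots,n-1\}$) by projecting past the current location of card $i$. The invariant I aim to maintain is
\[
|\Pi_{t}(j)-\Pi_{t}(m)|\le\Pi_{t}(i)-1,
\]
which holds at $t=0$ with equality, since $\Pi_{0}(j)=m+1$, $\Pi_{0}(m)=m$, and $\Pi_{0}(i)=1$. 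Once the invariant is in place, on the event $\Pi_{t}(i)\le r+1$ any value of $\Pi_{t}(j)$ lying in $\overline{j\pm\delta}$ forces $\Pi_{t}(m)\in\overline{j\pm(\delta+r)}$, which gives the second RHS term.

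The main obstacle is the design and verification of this second coupling. The $n$-deck inserts at a position in $\{1,\dots,n\}$ while the $(n-1)$-deck inserts at a position in $\{1,\dots,n-1\}$, and the natural projection past card $i$ is two-to-one at its current position, so one must carefully define the coupled $(n-1)$-step in such a way that it remains a genuine random-to-random insertion shuffle (conditioned on $m\in A^{t}$) in its own marginal. After that, one must show that the displacement bound on $|\Pi_{t}(j)-\Pi_{t}(m)|$ propagates through each of the $t$ steps, which reduces to a careful case analysis of the relative order of the removal position, the insertion position, and the position of card $i$ in the $n$-deck before and after each shuffle.
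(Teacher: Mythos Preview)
Your overall architecture---split on the size of $\Pi_t(i)$ and handle the two pieces by separate couplings with the $(n-1)$-deck---is exactly the paper's. However, two concrete points need fixing.

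First, your second-coupling invariant $|\Pi_t(j)-\Pi_t(m)|\le \Pi_t(i)-1$ is false at $t=0$: with $\Pi_0(j)=m+1$, $\Pi_0(m)=m$, $\Pi_0(i)=1$ it reads $1\le 0$. The invariant that actually propagates is $J_t-I_t\le M_t\le J_t-1$, i.e.\ $1\le J_t-M_t\le I_t$, which at $t=0$ gives $1\le 1\le 1$. The paper's coupling maintaining this is more structured than ``projecting past card $i$'': one removes paired cards under a bijection with the two properties that (a) cards strictly between $i$ and $j$ in the $n$-deck map to cards below $m$ in the $(n-1)$-deck, and (b) every card below $m$ is the image of some card below $j$; and one couples the reinsertion by noting that the three Bernoulli indicators for whether $J-I$, $M$, and $J$ increase have success probabilities $\bar J/n - \bar I/n \le \bar M/(n-1)\le \bar J/n$, so they can be nested. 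This is the step you flag as the main obstacle, and your ``projection'' description does not quite capture it.

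Second, Proposition~\ref{pro:modification} is the wrong tool for the first coupling: it compares $\max_m|A_m|$ versus $\max_m|B_m|$ for processes with \emph{identical} conditional jump kernels $p^A_{ik}=p^B_{ik}$ and only different holding probabilities, whereas here the two chains have different up/down probabilities. What does work is a direct monotone (birth--death) coupling: since $I_0=1<J_0$ and the two conditioned cards cannot cross, one has $I_t<J_t$ for all $t$, and then the one-step up/down probabilities of $I_t$ at state $k$ depend only on $k$; a short computation shows they are dominated in the required sense by those of $\Pi_t(1)$ in the $(n-1)$-deck, giving $I_t\le_{st}\Pi_t(1)$. The paper obtains the same bound by a different device, namely a deck-matching coupling in the style of the ``deck~A/deck~B'' argument from the proof of Lemma~\ref{lem:2ndordProb}.
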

Before we turn to proof of the lemma, let us state and prove the above mentioned corollary.
\begin{cor}
\label{cor:sum bound}For any real number $\delta\geq0$, integer
$0\leq t\leq n\log n$, and $j\in\left[n\right]$,
\[
\sum_{m=2}^{n-1}\mathcal{P}_{1,m}^{n,t}\left(\left[n\right]\times\overline{j\pm\delta}\right)\leq2\delta+\widehat{g}\left(n\right),
\]
where $\widehat{g}\left(n\right)=\Theta\left(\log^{2}n\right)$ is
a function independent of the parameters above.
\end{cor}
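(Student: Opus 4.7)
The plan is to apply Lemma \ref{lem:coupling ijm} with a parameter $r = \lceil C\log^2 n\rceil$ for a sufficiently large constant $C$, and then separately estimate the two terms in the resulting bound. After the index shift $m\mapsto m+1$, summing the lemma's bound over $m\in\{1,\dots,n-2\}$ gives
\begin{align*}
\sum_{m=2}^{n-1}\mathcal{P}_{1,m}^{n,t}\left([n]\times\overline{j\pm\delta}\right) &\leq (n-2)\,\PP_{id}^{n-1}\left(\left.\Pi_t(1)>r\,\right|\,1\in A^t\right) \\
&\quad + \sum_{m=1}^{n-2}\PP_{id}^{n-1}\left(\left.\Pi_t(m)\in\overline{j\pm(\delta+r)}\,\right|\, m\in A^t\right).
\end{align*}

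For the second sum I would exploit reversibility of the conditional single-card chain. Inspection of \eqref{eq:48} shows $p_{i,i+1}^{\Pi(j)}=p_{i+1,i}^{\Pi(j)}=i(n-i)/(n(n-1))$, so detailed balance holds with respect to the uniform measure on $[n-1]$; consequently $\PP_{id}^{n-1}(\Pi_t(m)=k\mid m\in A^t)=\PP_{id}^{n-1}(\Pi_t(k)=m\mid k\in A^t)$. Swapping the order of summation and using $\sum_m \PP_{id}^{n-1}(\Pi_t(k)=m\mid k\in A^t)\leq 1$ then bounds the second sum by
\[
\sum_{k\in\overline{j\pm(\delta+r)}\cap[n-1]}\sum_{m=1}^{n-2}\PP_{id}^{n-1}\left(\left.\Pi_t(k)=m\,\right|\,k\in A^t\right)\leq 2(\delta+r)+1.
\]

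For the first term, the task is to show a super-polynomial tail bound for the position of card $1$. At $j=1$ the transition probabilities \eqref{eq:48} give $p_{1,0}^{\Pi(1)}=0$, so the minimum $q_1$ in the decomposition $\zeta_m^{n,1,r}-1 = S_m+X_m+Y_m$ of Section~\ref{sec:Position} vanishes and $S_m\equiv 0$. Thus $\Pi_t(1)-1$ is controlled by $|X_t|+Y_t$. With modification parameter $M=r$, the monotone part $Y_t$ is stochastically dominated by a $\mbox{Bin}(t,1/n)$ variable of mean at most $\log n$, so a Chernoff bound yields $\PP(Y_t>r/2)=n^{-\omega(1)}$; and the asymmetry part $X_t$ is dominated by a symmetric random walk of step rate $O(r/n)$, whose typical absolute size is $O(\sqrt{r\log n})=O(\log^{3/2} n)$, and a Gaussian/Bernstein tail estimate yields $\PP(|X_t|>r/2)=O(n^{-C/24})$. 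Combined with the translation error \eqref{eq:38}, which is controlled by the same estimates, this gives $(n-2)\,\PP_{id}^{n-1}(\Pi_t(1)>r\mid 1\in A^t)=o(1)$. Summing the two contributions then yields the bound $2\delta+2r+O(1)=2\delta+\Theta(\log^2 n)$ asserted by the corollary.

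The main obstacle is the tail estimate for $\Pi_t(1)$: card $1$ sits at the boundary of the deck, so the hypotheses of Theorem~\ref{thm:Gaussian delocalization} fail and its Gaussian limit is unavailable. I would adapt the decomposition arguments of Section~\ref{sec:Position} directly, exploiting the fact that the vanishing of $q_1$ forces the dominant symmetric random-walk component $S_m$ to be identically zero, so that only the comparatively small correction terms $X_m$ and $Y_m$ need to be controlled via the inhomogeneity bounds already developed before \eqref{eq:43}--\eqref{eq:44}.
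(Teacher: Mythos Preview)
Your proposal is correct and follows essentially the same approach as the paper: apply Lemma~\ref{lem:coupling ijm} with $r=\Theta(\log^2 n)$, bound the sum over $m$ via stationarity of the uniform measure for the conditional single-card chain, and control $\PP_{id}^{n-1}(\Pi_t(1)>r\mid 1\in A^t)$ through the $S$--$X$--$Y$ decomposition of Section~\ref{sec:Position} combined with Bernstein-type inequalities. Your additional observation that $q_1=0$ forces $S_m\equiv 0$ is a nice simplification the paper does not make explicit (it simply carries the $S$ term, which is trivially zero here), but otherwise the two arguments coincide.
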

\begin{proof}
From Lemma \ref{lem:coupling ijm}, for any real $\delta \geq0$,
integers $r,\,t\geq0$, and $j\in\left[n\right]$,
\begin{equation}
\label{eq:cor1}
\begin{aligned}
\sum_{m=2}^{n-1}\mathcal{P}_{1,m}^{n,t}\left(\left[n\right]\times\overline{j\pm\delta}\right) & \leq\left(n-2\right)\PP_{id}^{n-1}\left(\left.\Pi_{t}\left(1\right)>r\,\right|\,1\in A^{t}\right)+\\
 & \,\,\sum_{m=1}^{n-1}\PP_{id}^{n-1}\left(\left.\Pi_{t}\left(m\right)\in\overline{j\pm\left(\delta+r\right)}\,\right|\, m\in A^{t}\right).
\end{aligned}
\end{equation}

Clearly, the transition probabilities of $\Pi_{t'}\left(m\right)$, $t'=0,1,\ldots,t$,
conditioned on $m\in A^{t}$ do not depend on $m$. Thus, up to a factor of $n-1$, the sum on the
right-hand side above is equal to the probability that at time $t$, the state of the
 Markov chain with those transition probabilities and with uniform initial distribution belongs
to $\overline{j\pm\left(\delta+r\right)}$.
Since the transition matrix of this chain is symmetric, the stationary measure for
this chain is the uniform measure. Thus,
\begin{equation}
\label{eq:cor2}
\sum_{m=1}^{n-1}\PP_{id}^{n-1}\left(\left.\Pi_{t}\left(m\right)\in\overline{j\pm\left(\delta+r\right)}\,\right|\, m\in A^{t}\right)\leq1+2\left(\delta+r\right).
\end{equation}

By \eqref{eq:31} and by the same argument as in
\eqref{eq:zetamax}, setting $t_{n}=\left\lfloor n\log n\right\rfloor $, for
any sequence of integers $r_{n}\geq0$,
\begin{align*}
&\PP_{id}^{n-1}\left( \left.  \Pi_t\left(1\right) >r_{n}\,\right|\,1\in A^{t_{n}}\right) \leq
 \PP_{id}^{n-1}\left( \left. \max_{0\leq t'\leq t_{n}} \left|\Pi_{t_{n}}\left(1\right) -1 \right| \geq r_{n}\,\right|\,1\in A^{t_{n}}\right) \\
 & \qquad = \PP\left(\max_{0\leq t'\leq t_{n}}\left|\zeta_{t'}^{n-1,1,r_{n}}-1\right|\geq r_{n}\right)\leq4\PP\left(S_{t_{n}}^{n-1,1,r_{n}}\geq r_{n}/3\right)\\
 & \qquad\quad+4\PP\left(W_{t_{n}}^{n-1,r_{n}}\geq r_{n}/3\right)+\PP\left(N_{t_{n}}^{n-1}\geq r_{n}/3\right).
\end{align*}
Using Bernstein inequalities (\cite{pPET95a}, Theorem 2.8), it is easy to verify that one
can choose a sequence $r_{n}=\Theta\left(\log^{2}n\right)$ such that the last part of the inequality above is
$o\left(\log^{2}n/n\right)$. From this, together with \eqref{eq:cor1} and \eqref{eq:cor2}, the corollary follows.
\end{proof}
We now turn the proof of Lemma \ref{lem:coupling ijm}.
\begin{proof}
(Lemma \ref{lem:coupling ijm}) The proof is based on a coupling
of the Markov chains corresponding to the shuffling of two decks of cards.
The first of the two decks contains $n$ cards, numbered from
$1$ to $n$, and at time $0$ (the initial state) has card $i$ at
position $1$ and card $j$ at position $m+1$. The second deck contains
$n-1$ cards, numbered from $1$ to $n-1$, and at time $0$ is ordered
lexicographically, i.e., according to the numbers of the cards. Let us call
the decks deck 1 and deck 2, respectively.

We want to define a procedure to simultaneously shuffle the decks such that:
\begin{enumerate}
\item At each step deck 1 is shuffled by choosing a random card, different from $i$ and $j$, removing it from the deck,
and inserting it back into the deck at a random position; with shuffles at different steps being independent.
\item At each step deck 2 is shuffled by choosing a random card, different from $m$, removing it from the deck,
and inserting it back into the deck at a random position; with shuffles at different steps being independent.
\item For all $t\geq0$,
\begin{equation}
J_{t}-I_{t}\leq M_{t}\leq J_{t}-1,\label{eq:Is&Js}
\end{equation}
where $J_t$ (respectively, $I_t$) denotes the position of the card numbered $j$ (respectively,
$i$) in deck 1 after completing $t$ shuffles, and $M_t$ denotes the position of card $m$ in deck 2 after completing $t$ shuffles.
\end{enumerate}

We shall also need the notation $\bar{J}_t$ (respectively, $\bar{I}_t$) for the position
of the card numbered $j$ (respectively, $i$) in deck 1, after completing
$t-1$ shuffles and performing only the removal of the $t$-th shuffle.
Note that since after the removal the deck contains only $n-1$ cards,
these positions are values in $\left[n-1\right]$. Similarly, $\bar{M}_t$
shall denote the corresponding position of the card numbered $m$ in deck 2.

The definition of the shuffling shall be done inductively, and so, let us begin by assuming that (\ref{eq:Is&Js})
holds for some time $t'$. Under the assumption, one can easily define
a bijection from the set of cards in deck 1 that are different from
$i$ and $j$, to the set cards in deck 2 that are different from
$m$, such that at time $t'$:
\begin{enumerate}
\item any card with position between the
cards numbered $i$ and $j$ in\\ deck 1 is mapped to a card below $m$
in deck 2; and
\item any card below $m$ in deck 2 is the image of some
card below $j$ \\in deck 1.
\end{enumerate}
See, for example, Figure \ref{fig:deckMatchings}.

\begin{figure}[h!]
\hspace{-2em}\includegraphics[width=1\textwidth]{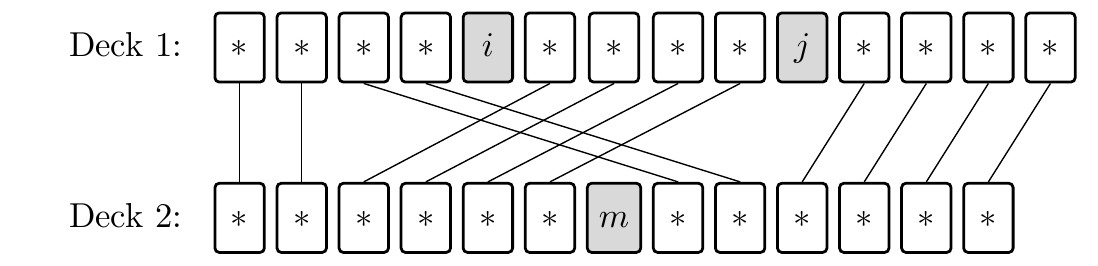}\\
\caption{\label{fig:deckMatchings}A bijection for decks 1 and 2.}
\end{figure}

Once the bijection is defined, one can perform the removal of step $t'+1$ from
both decks by choosing a random card (different from $i$ and $j$)
from deck 1 and removing this card from deck 1 and its image under
the bijection from deck 2. This ensures that
\begin{equation}
\bar{J}_{t'+1}-\bar{I}_{t'+1}\leq\bar{M}_{t'+1}\leq\bar{J}_{t'+1}-1.\label{eq:IJMbar}
\end{equation}

Denote
\begin{align*}
V_{1} & \triangleq\left(J_{t'+1}-I_{t'+1}\right)-\left(\bar{J}_{t'+1}-\bar{I}_{t'+1}\right),\\
V_{2} & \triangleq M_{t'+1}-\bar{M}_{t'+1}\,\,,\,\, V_{3}\triangleq J_{t'+1}-\bar{J}_{t'+1},
\end{align*}
and note that $V_{1},\, V_{2},\, V_{3}\in\left\{ 0,1\right\} $.

If we assume that the first two of the three
conditions we need the shuffling to satisfy hold, then the conditional probabilities
\begin{equation}
\nonumber
p_i = p_i(\bar{I}_{t'+1},\bar{J}_{t'+1},\bar{M}_{t'+1}) \triangleq \PP(V_i=1|\bar{I}_{t'+1},\bar{J}_{t'+1},\bar{M}_{t'+1}) \, , \quad i=1,\,2,\,3,
\end{equation}
satisfy
\[
p_{1}=\frac{\bar{J}_{t'+1}-\bar{I}_{t'+1}}{n}\leq p_{2}=\frac{\bar{M}_{t'+1}}{n-1}\leq p_{3}=\frac{\bar{J}_{t'+1}}{n}.
\]

Therefore, since $\left\{ V_{1}=1\right\} \subset\left\{ V_{3}=1\right\} $,
it is possible to couple the reinsertions of the cards back
to their decks at step $t'+1$, so that the position of each of the cards after reinsertion
is uniform in its deck, and so that (\ref{eq:Is&Js}) also holds for
time $t'+1$.

By induction, this completes our definition of the shuffling of the two decks and implies that for any integers $t,\,r\geq0$,
\begin{align*}
\mathcal{P}_{1,m+1}^{n,t}\left(\left[n\right]\times\overline{j\pm\delta}\right) & \leq\mathcal{P}_{1,m+1}^{n,t}\left(\left(\left[n\right]\setminus\left[1,r\right]\right)\times\left[n\right]\right)+\\
 & \,\, \PP_{id}^{n-1}\left(\left.\Pi_{t}\left(m\right)\in\overline{j\pm\left(\delta+r\right)}\,\right|\, m\in A^{t}\right).
\end{align*}

To finish the proof, note that by a coupling argument (remove cards
as described in the proof of Lemma \ref{lem:2ndordProb} for decks
A and B, with the difference of removing card $m$ instead of cards
$i$ and $j$ from the smaller deck in order to compare positions for `matchings', and define the
random insertion appropriately),
\[
\mathcal{P}_{1,m+1}^{n,t}\left(\left(\left[n\right]\setminus\left[1,r\right]\right)\times\left[n\right]\right)\leq \PP_{id}^{n-1}\left(\left.\Pi_{t}\left(1\right)>r\,\right|\,1\in A^{t}\right).
\]
\end{proof}

\section*{Acknowledgements}
This work arose from a graduate course taught by Professor Ross G. Pinsky. I am grateful to him for introducing the problem
and for valuable comments throughout the preparation of this work. I would also like to thank my advisor, Professor Robert J. Adler,
his helpful remarks on earlier versions of this paper.

\bibliographystyle{amsplain}
\bibliography{master}

\providecommand{\bysame}{\leavevmode\hbox to3em{\hrulefill}\thinspace}
\providecommand{\MR}{\relax\ifhmode\unskip\space\fi MR }
\providecommand{\MRhref}[2]{%
  \href{http://www.ams.org/mathscinet-getitem?mr=#1}{#2}
}
\providecommand{\href}[2]{#2}
\begin{thebibliography}{1}

\bibitem{Billingsley95}
Patrick Billingsley, \emph{Probability and measure}, third ed., Wiley Series in
  Probability and Mathematical Statistics, John Wiley \& Sons Inc., New York,
  1995, A Wiley-Interscience Publication. \MR{1324786 (95k:60001)}

\bibitem{Diaconis94randomwalks}
P.~Diaconis and L.~Saloff-Coste, \emph{Random walks on finite groups: a survey
  of analytic techniques}, Probability measures on groups and related
  structures, {XI} ({O}berwolfach, 1994), World Sci. Publ., River Edge, NJ,
  1995, pp.~44--75. \MR{1414925 (97k:60013)}

\bibitem{Diaconis03mathematicaldevelopments}
Persi Diaconis, \emph{Mathematical developments from the analysis of riffle
  shuffling}, Groups, combinatorics \& geometry ({D}urham, 2001), World Sci.
  Publ., River Edge, NJ, 2003, pp.~73--97. \MR{1994961 (2005a:60016)}

\bibitem{Diaconis93comparisontechniques}
Persi Diaconis and Laurent Saloff-Coste, \emph{Comparison techniques for random
  walk on finite groups}, Ann. Probab. \textbf{21} (1993), no.~4, 2131--2156.
  \MR{1245303 (95a:60009)}

\bibitem{pPET95a}
Valentin~V. Petrov, \emph{Limit theorems of probability theory}, Oxford Studies
  in Probability, vol.~4, The Clarendon Press Oxford University Press, New
  York, 1995, Sequences of independent random variables, Oxford Science
  Publications. \MR{1353441 (96h:60048)}

\bibitem{Uyemura-Reyes}
Jay-Calvin~Uyemura Reyes, \emph{Random walk, semi-direct products, and card
  shuffling}, ProQuest LLC, Ann Arbor, MI, 2002, Thesis (Ph.D.)--Stanford
  University. \MR{2703300}

\bibitem{Saloff-Coste}
L.~Saloff-Coste and J.~Z{\'u}{\~n}iga, \emph{Refined estimates for some basic
  random walks on the symmetric and alternating groups}, ALEA Lat. Am. J.
  Probab. Math. Stat. \textbf{4} (2008), 359--392. \MR{2461789 (2010a:60019)}

\end{thebibliography}

\end{document}